\theoremstyle{plain}
\newtheorem{theorem}{Theorem}[section]
\newtheorem{lemma}[theorem]{Lemma}
\newtheorem{corollary}[theorem]{Corollary}
\newtheorem{proposition}[theorem]{Proposition}
\newtheorem{remark}[theorem]{Remark}
\def\section{\@startsection{section}{1}%
  \z@{1.5\linespacing\@plus\linespacing}{.5\linespacing}%
  {\normalfont\bfseries\large\centering}}
\def\RR{{\mathbb R}}
\def\NN{{\mathbb N}}
\def\calO{\mathcal O}
\def\({\left(}
\def\){\right)}
\def\<{\left\langle}
\def\>{\right\rangle}
\def\eps{\varepsilon}
\DeclareMathOperator{\RE}{Re}
\numberwithin{equation}{section}
\newcommand{\be}{\begin{equation}}
\newcommand{\ee}{\end{equation}}
\newcommand{\bea}{\begin{eqnarray}}
\newcommand{\eea}{\end{eqnarray}}
\newcommand{\bee}{\begin{eqnarray*}}
\newcommand{\eee}{\end{eqnarray*}}
\def\ni{\noindent}
\def\bs{\bigskip}
\def\ms{\medskip}
\def\eps{\varepsilon}
\def\fref#1{{\rm (\ref{#1})}}
\def\pref#1{{\rm \ref{#1}}}
\def\pa{\partial}
\def\p{\partial}
\def\calH{{\mathcal H}}
\def\bx{{\mathbf x}}
\def\psie{{\Psi^{\eps,\alpha}}}
\def\psieg{{\Psi^{\eps,\alpha,\gamma}}}
\def\phieg{\Phi^{\eps,\alpha,\gamma}}
\def\beq{{A^{\eps,\alpha,\gamma}}}
\def\beqo{{A^{0,\alpha,\gamma}}}
\def\beqs{{A^{\eps,0,\gamma}}}
\def\beqg{{A^{\eps,\alpha,0}}}
\def\beqoog{{A^{0,0,0}}}
\newcommand{\bn}{{\bf n} }
\def\Leqo{{B^{\alpha,\gamma}}}
\begin{document}

\title[Dimension reduction for dipolar BEC]{Dimension reduction for dipolar
Bose-Einstein condensates in the strong interaction regime}
\author[W. Bao]{Weizhu Bao}
\email{matbaowz@nus.edu.sg}
\address{Department of Mathematics, National University of Singapore, Singapore 119076, Singapore}
\author[L. Le Treust]{Lo\"{i}c Le Treust}
\email{loic.letreust@univ-rennes1.fr}
\address{IRMAR, Universit\'e de Rennes 1 and INRIA, IPSO Project}
\author[F. M\'ehats]{Florian M\'ehats}
\email{florian.mehats@univ-rennes1.fr}
\address{IRMAR, Universit\'e de Rennes 1 and INRIA, IPSO Project}

\begin{abstract}
	We study dimension reduction for the three-dimensional Gross-Pitaevskii
equation with a long-range and anisotropic dipole-dipole interaction modeling
dipolar Bose-Einstein condensation in a strong interaction regime. The cases of disk shaped condensates (confinement from dimension three to dimension two) and cigar shaped condensates (confinement to dimension one) are analyzed. In both cases, the analysis combines averaging tools and semiclassical techniques. Asymptotic models are derived, with rates of convergence in terms of two small dimensionless parameters characterizing the strength of the confinement and the strength of the interaction between atoms.
\end{abstract}
\maketitle

\section{Introduction and main results}
\label{sec:intro}

In this paper, we study dimension reduction for the three-dimensional Gross-Pitaevskii
equation (GPE) with a long-range and anisotropic dipole-dipole interaction (DDI) modeling
dipolar Bose-Einstein condensation \cite{Gri,Lum}.
In contrast with the existing literature on this topic
\cite{BaoBenCai}, 
we will {\em not} assume that the degenerate dipolar quantum gas is in a weak interaction regime.

Based on the mean field approximation \cite{Bao1,Carles,Lahaye,Santos,YY1,YY2},
the dipolar Bose-Einstein condensate is modeled by its wavefunction
$\Psi:=\Psi(t,\bx)$ satisfying the GPE with a DDI written in physical variables as
\be
\label{GPE}
i\hbar \pa_t \Psi=-\frac{\hbar^2}{2m}\Delta \Psi+V(\bx)\Psi+Ng|\Psi|^2\Psi+N C_{\rm dip}
\left(U_{\rm dip}\ast|\Psi|^2\right)\Psi,
\ee
where $\Delta$ is the Laplace operator, $V(\bx)$ denotes the trapping harmonic potential, $m>0$ is the mass, $\hbar$ is the Planck constant, $g=\frac{4\pi\hbar^2a_s}{m}$ describes the contact (local) interaction between atoms in the condensate with the $s$-wave scattering length $a_s$, $N$ denotes the number of atoms in the condensate, and the dipole-dipole interaction kernel $U_{\rm dip}(\bx)$ is given as
\be\label{kel} U_{\rm dip}(\bx)=
\frac{1}{4\pi}\,\frac{1-3(\bx\cdot \bf
n)^2/|\bx|^2}{|\bx|^3}=\frac{1}{4\pi}\,
\frac{1-3\cos^2(\theta)}{|\bx|^3}, \qquad \bx\in{\mathbb R}^3,\ee with
the dipolar axis $\bn=(n_1,n_2,n_3)\in\mathbb R^3$ satisfying
$|\bn|=\sqrt{n_1^2+n_2^2+n_3^3}=1$. Here $\theta$ is the angle between
the polarization axis $\bn$ and relative position of two atoms (that is,
$\cos\theta =\bn\cdot \bx /|\bx|$). For magnetic dipoles we have
 $C_{{\rm dip}}=\mu_0\mu_{\rm dip}^2$, where $\mu_0$ is the magnetic vacuum permeability and $\mu_{\rm dip}$ the dipole moment,
 and for electric dipoles we have $C_{\rm dip}={\mathbf p}_{\rm dip}^2/\epsilon_0$, where $\epsilon_0$ is vacuum permittivity
 and ${\mathbf p}_{\rm dip}$ the electric dipole moment.
  The wave function is normalized according to
$$\int_{{\mathbb R}^3} |\Psi(t,\bx)|^2d\bx=1.$$

\subsection{Nondimensionalization}
We assume that the harmonic potential is strongly anisotropic and confines particles
from dimension $3$ to dimension $3-d$. We shall denote $\bx=(x,z)$, where $x\in \RR^{3-d}$ denotes the variable in the confined direction(s) and $z\in \RR^d$ denotes the variable in the transversal direction(s). In applications, we will have either $d = 1$ for disk-shaped condensates, or $d = 2$ for cigar-shaped condensates. Similarly, we
 denote $\bn=(n_x,n_z)$ with $n_x\in \RR^{3-d}$  and $n_z\in \RR^d$. The harmonic potential reads \cite{Baocai2013,Pethick,PitaevskiiStringari}
$$V(\bx)=\frac{m}{2}\left({\omega}_x^2|x|^2+{\omega}_z^2|z|^2\right)$$
where ${\omega}_z\gg{\omega}_x$. We introduce three dimensionless parameters
$$\eps=\sqrt{{\omega}_x/{{\omega}_z}},\qquad \beta=\frac{4\pi N|a_s|}{a_0}, \qquad \lambda_0=\frac{C_{\rm dip}}{3|g|},$$
where the harmonic oscillator length is defined by \cite{Baocai2013,Pethick,PitaevskiiStringari}
$$a_0=\(\frac{\hbar}{m{\omega}_x}\)^{1/2}.$$
The dimensionless parameter $\lambda_0$ measures the relative strength of dipolar and
$s$-wave interactions.
Let us rewrite the GPE \fref{GPE} in dimensionless form. For that, we introduce the new variables $\tilde t$, $\tilde x$, $\tilde z$ and the associated unknown $\widetilde \Psi$ defined by
\begin{equation}\label{scaling1}
\tilde t={\omega}_x t,\qquad \tilde x=\frac{x}{a_0},\qquad \tilde z=\frac{z}{a_0},\qquad \widetilde \Psi(\tilde t,\tilde x,\tilde z)=a_0^{3/2}\Psi(t,x,z).
\end{equation}
The dimensionless GPE equation reads \cite{Baocai2013,Pethick,PitaevskiiStringari}
\begin{equation}\label{GPE0}
i\pa_{\tilde t} \widetilde \Psi=-\frac{1}{2}\Delta \widetilde \Psi+\frac{1}{2}\left(|\tilde x|^2+\frac{1}{\eps^4}| \tilde z|^2\right)\widetilde \Psi+\beta\sigma |\widetilde \Psi|^2\widetilde \Psi +3\lambda_0\beta \left(U_{\rm dip}\ast|\widetilde \Psi|^2\right)\widetilde \Psi,
\end{equation}
where $\sigma={\rm sign\;}a_s\in\{-1,1\}$. Define the differential operators $\partial_{\bn}=\bn\cdot\nabla$ and $\partial_{\bn\bn}=\partial_{\bn}\partial_{\bn}$. Mathematically speaking, the convolution with $U_{\rm dip}$ in equation \eqref{GPE} has to be considered in the distributional sense and we have the following identity (see \cite{Bao1})
\be \label{decop1}
U_{\rm dip}(\bx)=\mathbf{p.v.}\(\frac{1}{4\pi
|\bx|^3}\left(1-\frac{3(\bx\cdot {\bf n})^2}{|\bx|^2}\right)\) = -
\frac{1}{3}\delta (\bx)-\p_{\bn\bn}\left( \frac{1}{4\pi |\bx|}\right),\qquad
\bx\in {\mathbb R}^3,
\ee
 with $\delta$ being the Dirac distribution.
 %
 %
  %
 \begin{remark}\label{rem:FourierTr} Let us define the Fourier transform of a function  $u\in L^1(\RR^3)$ by
 \bee
 	\widehat{u}(k) = \int_{\RR^3}u(\bx)e^{-ik\cdot \bx}d\bx, \quad \bx\in \RR^3.
 \eee
 	From identity \eqref{decop1}, we get
	\begin{equation}\label{eq:FTUdip}
		\widehat{U_{\rm dip}}(k) = -\frac{1}{3} + \frac{(k\cdot \bn)^2}{|k|^2}, \mbox{ for all } k\in \RR^3.
	\end{equation}
 \end{remark}
 We can re-formulate  the GPE (\ref{GPE0})  as the following
Gross-Pitaevskii-Poisson system (GPPS) \cite{Bao1,Bao2} \be
\label{GPEVersionDiffe}
\begin{split}
&i\pa_{\tilde t} \widetilde \Psi=-\frac{1}{2}\Delta \widetilde \Psi+\frac{1}{2}\left(|\tilde x|^2+\frac{1}{\eps^4}| \tilde z|^2\right)\widetilde \Psi+\beta(\sigma-\lambda_0)|\widetilde \Psi|^2\widetilde \Psi -3\lambda_0\beta (\p_{\bn\bn} \varphi)\,
\widetilde \Psi,  \\
&\qquad \Delta \varphi =
-|\widetilde\Psi|^2, \qquad
\lim\limits_{|\tilde \bx|\to\infty}\varphi(\tilde t,\tilde\bx)=0.
\end{split}
 \ee

Under scaling \eqref{scaling1}, dimension reduction of the above GPPS \eqref{GPE0} was formally derived from 3D to 2D and 1D
in \cite{BaoBenCai,Bao2,MCB} for any fixed $\beta$, $\lambda_0$ and $\bn$ when $\eps\to0^+$.
Rigorous mathematical justification was only given in the weak interaction regime, i.e.
when $\beta={\mathcal O}(\eps)$ from 3D to 2D and when $\beta={\mathcal O}(\eps^2)$
from 3D to 1D \cite{BaoBenCai}. It is an open problem for the case where $\beta$ is fixed when
$\eps\to0^+$.

\subsection{New scaling}
In order to observe the condensate at the correct space scales, we will now proceed to a rescaling in $x$ and $z$. Let us denote
\begin{equation}
\label{alpha}
	\alpha=\eps^{2d/n}\beta^{-2/n}.
\end{equation}
The scaling assumptions are $$\alpha\ll 1\quad\mbox{and}\quad \eps\ll 1.$$
We define the new variables
\[
t'=\tilde t,\qquad z'=\frac{\tilde z}{\eps},\qquad x'=\alpha^{1/2}\tilde x,
\]
which means that the typical length scales of the dimensionless variables are $\eps$ in the $z$-direction and $\alpha^{-1/2}$ in the $x$-direction. The wavefunction is rescaled as follows:
$$\psie(t',x',z'):=\eps^{d/2}\alpha^{-n/4}\widetilde \Psi(\tilde t,\tilde x,\tilde z)e^{i\tilde td/2\eps^2}.$$
Notice that the $L^2$ norm of $\psie$ is left invariant by this rescaling, so we still have
$$\int_{\RR^{3}} |\Psi^{\eps,\alpha}(t,x,z)|^2dxdz=1.$$

We end up with the following rescaled GPE (for simplicity we omit the primes on the variables):
\begin{align}
i\alpha \pa_t \psie=&\frac{\alpha}{\eps^2}\calH_z\psie-\frac{\alpha^2}{2}\Delta_x \psie+\frac{|x|^2}{2}\psie\nonumber\\
&+\alpha\(\sigma|\psie|^2 + {3\lambda_0\beta}U_{\rm dip}^{\eps,\alpha}*|\psie|^2\)\psie\label{gpe0}
\end{align}
where the transversal Hamiltonian is $$\calH_z:=-\frac{1}{2}\Delta_z+\frac{|z|^2}{2}-\frac{d}{2}$$
and
$U_{\rm dip}^{\eps,\alpha}$ is defined by
\[
	U_{\rm dip}^{\eps,\alpha}(x,z) = U_{ \rm dip}\(\frac{x}{\sqrt{\alpha}},\eps z\),\quad (x,z) \in \RR^3.
\]
Let us remark that
	\begin{equation}\label{eq:FTUdipsca}
		\widehat{U_{\rm dip}^{\eps,\alpha}}(k_x,k_z) = \eps^{-d}\alpha^{n/2}\widehat{U_{\rm dip}}\(\sqrt{\alpha} k_x, \frac{k_z}{\eps}\),\mbox{ for all }(k_x,k_z)\in \RR^3.
	\end{equation}
	Thanks to identity \eqref{eq:FTUdip}, we can remark that $\widehat{U_{\rm dip}}$ is a bounded function of $\RR^3$ into $[-\frac{1}{3}, \frac{2}{3}]$.
	For $\gamma>0$, we denote by $V_{\rm dip}^\gamma$ the tempered distribution whose Fourier transform is
	\bea\label{def:Vdip}
		\widehat{V_{\rm dip}^\gamma}(k_x,k_z) = \(-\frac{1}{3} + \frac{(\gamma k_x\cdot n_x + k_z\cdot n_z)^2}{|\gamma k_x|^2+|k_z|^2}\)
	\eea
	so that $\widehat{V_{\rm dip}^\gamma}(k_x,k_z)\in[-1/3,2/3]$ for all $(k_x,k_z)\in\RR^3$ and
	\[
		\widehat{U_{\rm dip}^{\eps,\alpha}}(k_x,k_z)  =  \eps^{-d}\alpha^{n/2}\widehat{V_{\rm dip}^{\sqrt{\alpha}\eps}}(k_x,k_z),\mbox{ for all }(k_x,k_z)\in \RR^3.
	\]
	Let us note that \eqref{alpha} is equivalent to
	\[
		\beta \eps^{-d}\alpha^{n/2} =1
	\]
	so that equation \eqref{gpe0} becomes
\begin{align}
i\alpha \pa_t \psie=&\frac{\alpha}{\eps^2}\calH_z\psie-\frac{\alpha^2}{2}\Delta_x \psie+\frac{|x|^2}{2}\psie\nonumber\\
&+\alpha\(\sigma|\psie|^2 +3\lambda_0V_{\rm dip}^{\sqrt{\alpha}\eps}*|\psie|^2\)\psie.\label{gpe1}
\end{align}
\begin{remark}\label{eq:mode0}
 The spectrum of $\calH_z$ is the set of integers $\NN$. We define $(\omega_k)_{k\in\NN}$ an orthonormal basis of $L^2(\RR^3)$ made of eigenvectors of $\calH_z$ where $\omega_0$ is the ground state (associated to the eigenvalue 0)
 \[
 	{\omega_0}(z)=\pi^{-d/4}e^{-|z|^2/2}.
\]
\end{remark}

\begin{remark}\label{rem:Vdipo}
	Since $(\widehat{ V^\gamma_{\rm dip}})_{\gamma\geq0}$ is uniformly bounded in $L^\infty$ and
	\[
		\widehat{ V^\gamma_{\rm dip}}\to \widehat{ V^0_{\rm dip}} \mbox{ a.e.}
	\]
as $\gamma\to 0$, Lebesgue's dominated convergence Theorem ensures that
	\[
		V^\gamma_{\rm dip}*U\to V^0_{\rm dip}*U \mbox{ in }L^2(\RR^3)
	\]
	for all $U\in L^2(\RR^3)$. Moreover, let us remark that
	\[
		V^0_{\rm dip}*U(x,z) = \frac{n_z^2-d}{3d}U(x,z),\quad (x,z)\in \RR^3
	\]
	for all $U$ such that $U(x,z) = V(x,|z|)$ for all $(x,z)\in \RR^3$.
\end{remark}

\bs

In this paper, we study the behavior of the solution of equation \eqref{gpe1} as $\eps\to 0$ and $\alpha\to 0$ independently so that $\beta$ may be bounded but can also tends to $+\infty$.

	
	Our key mathematical assumption will be that the wavefunction $\psie$ at time $t=0$ is under the WKB form:
	\begin{equation}
	\label{initialWKB}
		\psie(0,x,z)= \Psi^\alpha_{\rm init}(x,z):=A_0(x,z)e^{iS_0(x)/\alpha},\quad \forall (x,z)\in \RR^{3}.
	\end{equation}
Here $A_0$ is a complex-valued function and $S_0$ is real-valued.

Let us introduce another parameter $\gamma>0$ to get a better understanding of the different phenomena involved during the limiting procedures. In this paper, we will study instead of equation \eqref{gpe1} the following one :
\bea\label{gpe2}
&i\alpha \pa_t \psi=\frac{\alpha}{\eps^2}\calH_z\psi-\frac{\alpha^2}{2}\Delta_x \psi+\frac{|x|^2}{2}\psi+\alpha\(\sigma|\psi|^2 +3\lambda_0V_{\rm dip}^{\gamma}*|\psi|^2\)\psi,\\
&\nonumber \qquad \psi(0,x,z) = A_0(x,z)e^{iS_0(x)\alpha},\; \forall (x,z)\in \RR^3.
\eea
From now on, we denote by $\psieg$ the solution $\psi$ of equation \eqref{gpe2}. Let us insist on the fact that $\psieg$ is equal to the solution $\psie$ of equation \eqref{gpe1} if we assume that $\gamma = \eps\sqrt{\alpha}$.

\subsection{Heuristics} In this section, we derive formally the limiting behavior of the solution of \eqref{gpe2} as $\eps$ (strong confinement limit), $\alpha$ (semiclassical limit) and $\gamma$ (limit of the dipole-dipole interaction term) go to $0$. Our main result, stated in the next section, will be that in fact these limits commute together: the limit is valid as $\eps$, $\alpha$ and $\gamma$ converge {\sl independently} to zero. Thus, this gives us as a by-product the behavior of the solution of equation \eqref{gpe1} as $\eps$ and $\alpha$ converge {\sl independently} to zero.

\subsubsection*{a) Strong confinement limit : $\eps \to 0$}
Let us fix $\alpha\in (0,1]$ and $\gamma\in[0,1]$. Following \cite{BenAbdallah2008154}, in order to analyze the strong partial confinement limit, it is convenient to begin by filtering out the fast oscillations at scale $\eps^2$ induced by the transversal Hamiltonian. To this aim, we introduce the new unknown
	\[
		\phieg(t,\cdot)=e^{it\mathcal{H}_z/\eps^2}\psieg(t,\cdot).
	\]
It satisfies the equation
	\bee
		i\alpha \pa_t\phieg=-\frac{\alpha^2}{2}\Delta_x \phieg+\frac{|x|^2}{2}\phieg+\alpha F^\gamma\left(\frac{t}{\eps^2},\phieg\right)
	\eee
	where the nonlinear function is defined by
	\begin{equation}
	\label{def:F}
		\begin{array}{lll}
			 F^\gamma(\theta,\Phi)=e^{i\theta\mathcal{H}_z}\left(\sigma\left|e^{-i\theta\mathcal{H}_z}\Phi\right|^2+3\lambda_0V_{dip}^{\gamma}*|e^{-i\theta\mathcal{H}_z}\Phi|^2\right)e^{-i\theta\mathcal{H}_z}\Phi.
		\end{array}
	\end{equation}
	A fundamental remark is that for all fixed $\Phi$, the function $\theta\mapsto F^\gamma(\theta, \Phi)$ is $2\pi$-periodic, since the spectrum of $\mathcal{H}_z$ only contains integers.
	For any fixed $\alpha>0$ and $\lambda_0 = 0$, Ben Abdallah et al. \cite{BenAbdallah2008154,Benabdallahsecondorder} proved by an averaging argument that we have $\phieg=\Phi^{0,\alpha,\gamma}+\mathcal O(\eps^2)$, where $\Phi^{0,\alpha,,\gamma}$ solves the averaged equation
	\begin{equation}\label{gpelim}
		i\alpha \pa_t \Phi^{0,\alpha,\gamma}=-\frac{\alpha^2}{2}\Delta_x \Phi^{0,\alpha,\gamma}+\frac{|x|^2}{2}\Phi^{0,\alpha,\gamma}+\alpha F_{av}^\gamma(\Phi^{0,\alpha,\gamma}), \qquad\Phi^{0,\alpha,\gamma}(t=0)=\Psi^\alpha_{\rm init}
	\end{equation}
where $F_{av}^\gamma$ is the averaged vector field
	\bea\label{def:Fav}
			F_{av}^\gamma(\Phi)=\frac{1}{2\pi}\int_0^{2\pi}F^\gamma(\theta,\Phi)d\theta.
	\eea
	In our study, we consider the case $\lambda_0\in\RR$ and a similar averaging argument should  give us the same result
	\[
		\phieg=\Phi^{0,\alpha,\gamma}+\mathcal O(\eps^2).
	\]

\subsubsection*{b) Semi-classical limit : $\alpha \to 0$}

Let us remark that equation \eqref{gpe2} is written in the semi-classical regime of "weakly nonlinear geometric optics", which can be studied by a WKB analysis. Here we are only interested in the limiting model, so in the first stage of the WKB expansion. Let us introduce the solution $S(t,x)$ of the eikonal equation
\begin{equation}
		\pa_t S +\frac{|\nabla_x S|^2}{2}+\frac{|x|^2}{2} = 0,\qquad S(0,x)=S_0(x)\label{eq:S}
		\end{equation}
and filter out the oscillatory phase of the wavefunction by setting
\begin{equation}
\label{beq}
	\Omega^{\eps,\alpha,\gamma} =e^{-iS(t,x)/\alpha}\,\psieg,
\end{equation}
so that
\begin{align}\label{eq:b11}
		\pa_t \Omega^{\eps,\alpha,\gamma} +\nabla_x S \cdot \nabla_x \Omega^{\eps,\alpha,\gamma}  +\frac{1}{2} \Omega^{\eps,\alpha,\gamma} \Delta_x S=i\frac{\alpha}{2}\Delta_x \Omega^{\eps,\alpha,\gamma}-i\frac{\mathcal{H}_z}{\eps^2}\Omega^{\eps,\alpha,\gamma}\\
		\nonumber-i\(\sigma|\Omega^{\eps,\alpha,\gamma}|^2 +3\lambda_0V_{\rm dip}^{\gamma}*|\Omega^{\eps,\alpha,\gamma}|^2\)\Omega^{\eps,\alpha,\gamma},
\end{align}
where
\[
		\Omega^{\eps,\alpha,\gamma}(0,x,z)=A_0(x,z),\;\mbox{ for all }(x,z)\in \RR^3.
\]
For all fixed $\eps>0$, we can expect that
\[
	\Omega^{\eps,\alpha,\gamma}=\Omega^{\eps,0,\gamma}+\mathcal O(\alpha),
\]
as $\alpha\to 0$ where  $\Omega^{\eps,0,\gamma}$ solves the equation
\begin{align}\label{eq:b12}
		&\pa_t \Omega^{\eps,0,\gamma} +\nabla_x S \cdot \nabla_x \Omega^{\eps,0,\gamma}  +\frac{1}{2} \Omega^{\eps,0,\gamma} \Delta_x S\\
		\nonumber&\;=-i\frac{\mathcal{H}_z}{\eps^2}\Omega^{\eps,0,\gamma}-i\(\sigma|\Omega^{\eps,0,\gamma}|^2 +3\lambda_0V_{\rm dip}^{\gamma}*|\Omega^{\eps,0,\gamma}|^2\)\Omega^{\eps,0,\gamma},\\
		\nonumber&\;\Omega^{\eps,0,\gamma}(0,x,z)=A_0(x,z),\;\mbox{ for all }(x,z)\in \RR^3.
\end{align}
\begin{remark}		
	A key point here in this analysis is that the nonlinearities $F^\gamma$ and $F_{av}^\gamma$ are gauge invariant \textit{i.e.} for all $U\in L^2(\RR^{3})$, all $\gamma\in [0,1]$ and for all $t$, we have
	\bee
		F^\gamma(t,Ue^{iS/\alpha}) = F^\gamma(t,U)e^{iS/\alpha}, \qquad
		F_{av}^\gamma(Ue^{iS/\alpha})= F_{av}^\gamma(U)e^{iS/\alpha}.
	\eee
\end{remark}
\subsubsection*{c) Dipole-dipole interaction limit $\gamma\to 0$}
We expect that for any $(\eps,\alpha)\in(0,1]^2$
\[
	\psieg = \Psi^{\eps,\alpha,0} + \mathcal O(\gamma^q)
\]
where $q>0$ and
\bea\label{gpe6}
&i\alpha \pa_t \Psi^{\eps,\alpha,0} &=\frac{\alpha}{\eps^2}\calH_z\Psi^{\eps,\alpha,0} -\frac{\alpha^2}{2}\Delta_x\Psi^{\eps,\alpha,0} +\frac{|x|^2}{2}\Psi^{\eps,\alpha,0} \\
&&\nonumber \qquad\qquad+\alpha\(\sigma|\Psi^{\eps,\alpha,0} |^2 +3\lambda_0V_{\rm dip}^{0}*|\Psi^{\eps,\alpha,0} |^2\)\Psi^{\eps,\alpha,0},\\
&&\nonumber \qquad\qquad \Psi^{\eps,\alpha,0}(t=0) = \Psi^\alpha_{init}.
\eea
In this paper, the main difficulty we have to tackle and also the main difference with respect to the previous work of the authors \cite{BaoLetMehatsEN} in the case $\lambda_0 = 0$, is the study of this limit $\gamma\to 0$.
\subsubsection*{d) The simultaneous study of the three limits.}
We introduce for any $(\eps,\alpha,\gamma)\in (0,1]^3$
\[
	\beq(t,x,z) = e^{it\mathcal{H}_z/\eps^2}e^{-iS(t,x)/\alpha}\,\psieg(t,x,z),\mbox{ for }(x,z)\in\RR^3,
\]
which is the solution of the equation
\bea\label{eq:b1}
		\pa_t \beq +\nabla_x S \cdot \nabla_x \beq  +\frac{1}{2} \beq \Delta_x S=\frac{i\alpha\Delta_x}{2}\beq-iF^\gamma\left(\frac{t}{\eps^2},\beq\right),\\
	\nonumber\beq(0,x,z)=A_0(x,z).
\eea
We will also consider the solution $A^{\eps,0,\gamma}$ of \eqref{eq:b1} with $\alpha=0$, the solution $A^{\eps,\alpha,0}$ of \eqref{eq:b1} with $\gamma=0$ and the solution $\beqo$ of
\bea\label{eq:b2}
		\pa_t \beqo +\nabla_x S \cdot \nabla_x \beqo  +\frac{1}{2} \beqo \Delta_x =\frac{i\alpha\Delta_x}{2}\beqo-iF^\gamma_{av}\left(\beqo\right),\\
	\nonumber\beqo(0,x,z)=A_0(x,z),
\eea
for all $(x,z)\in \RR^3$.
As long as the phase $S(t,\cdot)$ remains smooth, i.e. before the formation of caustics in the eikonal equation \eqref{eq:S}, we expect to have
\[
	\beq=\beqoog+\mathcal O(\eps^2+\alpha+\gamma^q),
\]
and the solution $\psieg$ of equation \eqref{gpe2} is expected to behave as
\begin{equation}\label{estimate}
	\psieg(t,x,z)=e^{-it\mathcal{H}_z/\eps^2}e^{iS(t,x)/\alpha}\,\beqoog(t,x,z)+\mathcal O(\eps^2+\alpha+\gamma^q)
\end{equation}
for some $q>0$.

\subsection{Main results}
	In this paper, our main contribution is the rigorous study of the dipole-dipole interaction limits $\gamma\to0$ as well as the study of the three simultaneous limits $\eps\to 0$, $\alpha\to 0$ and $\gamma\to 0$ involved in the problem. The techniques used for the study of the limits $\eps\to0$ and $\alpha\to0$ were developed by the authors in \cite{BaoLetMehatsEN}. We will recall and use some of the results proved in this first paper.

\subsubsection{Existence, uniqueness and uniform boundedness results}
	Let us make precise our functional framework. For wavefunctions, we will use the scale of Sobolev spaces adapted to quantum harmonic oscillators:
\[
	B^m(\RR^{3}):=\{u\in H^m(\RR^{3})\;\mbox{such that}\,\left(|x|^m+|z|^m\right)u\in L^2(\RR^{3})\}
\]
for $m\in \NN$.
\begin{remark}\label{rem:Balg}
	Assuming that $m\geq 2$, we get  that
	\[
		B^m(\RR^{3})\hookrightarrow H^m(\RR^{3})\hookrightarrow L^\infty(\RR^{3}).
	\]
	 $H^m(\RR^{3})$ and $B^m(\RR^{3})$ are two algebras. In this paper, we will also make frequent use of the estimate
\be\label{eq:Balg}
	\||\bx|^k\partial_z^{\kappa}u\|_{L^2}\leq C\|u\|_{B^m},\quad \mbox{for all }u\in B^m(\RR^{3}) \mbox{ and }k+|\kappa|\leq m
\ee
(see \cite{helffer1984theorie} and \cite{BenAbdallah2008154} for a more general class of confining potential).
\end{remark}
For the phase $S$, we will use the space of subquadratic functions, defined by
\begin{equation*}
  {\tt SQ}_k(\RR^{3-d}) = \{f\in \mathcal C^k(\RR^{3-d};\RR))\;\mbox{such that}\;\partial^\kappa_x f\in L^\infty(\RR^{3-d}), \,\mbox{for all}\; 2\leq |\kappa|\leq k\}.
\end{equation*}
where $k\in\NN$, $k\geq 2$.
In the following theorem, we give existence and uniqueness results for equations \eqref{eq:S}, \eqref{eq:b1} and \eqref{eq:b2}, as well as uniform bounds on the solutions.
\begin{theorem}\label{theo:local1}
	Let $(\eps,\alpha,\gamma)\in[0,1]^3$, $A_0\in B^m(\RR^{3})$ and $S_0\in {\tt SQ}_{s+1}(\RR^{3-d})$, where $m\geq 5$ and $s\geq m+2$.
 Then the following holds:
  \begin{enumerate}[(i)]
  \item \label{theo1:pt1}There exists $T>0$ such that the eikonal equation \fref{eq:S} admits a unique solution
  $S\in \mathcal{C}([0,T];{\tt SQ}_s(\RR^{3-d}))\cap \mathcal C^s([0,T]\times \RR^{3-d})$.
  \item \label{theo1:pt2}There exists $\overline T\in(0,T]$ independent of $\eps$, $\alpha$ and $\gamma$ such that the solutions $\beq$ and $\beqo$ of, respectively, \eqref{eq:b1} and \eqref{eq:b2}, are uniquely defined in the space
  \[
  	C([0, \overline T];B^m(\RR^{3}))\cap C^1([0,\overline T];B^{m-2}(\RR^{3})).
\]
  \item \label{theo1:pt3} The functions $(\beq)_{\eps,\alpha,\gamma}$ are bounded in
  	\[
		C([0,\overline T];B^m(\RR^{3}))\cap C^1([0,\overline T];B^{m-2}(\RR^{3}))
	\]
	uniformly with respect to $(\eps,\alpha,\gamma)\in[0,1]^3$.
  \end{enumerate}
\end{theorem}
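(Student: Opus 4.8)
The plan is to prove the three parts in order, treating the eikonal equation first, then setting up a fixed-point / energy-estimate scheme for the transport-Schr\"odinger system \eqref{eq:b1}--\eqref{eq:b2}, and finally extracting uniform bounds by tracking the dependence of all constants on $(\eps,\alpha,\gamma)$.

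\emph{Part \eqref{theo1:pt1}: the eikonal equation.} Equation \eqref{eq:S} is a Hamilton--Jacobi equation with quadratic potential, whose Hamiltonian flow is that of the harmonic oscillator; I would solve it by the method of characteristics. The bicharacteristics $(X(t,y),\Xi(t,y))$ solve a linear ODE system ($\dot X=\Xi$, $\dot\Xi=-X$), so they are globally defined and depend smoothly on the initial data $y$; since $S_0\in{\tt SQ}_{s+1}$, the map $y\mapsto X(t,y)$ is a diffeomorphism for $t$ in a small interval $[0,T]$ (the Jacobian stays close to identity), and $S(t,X(t,y))$ is obtained by integrating $\frac12|\Xi|^2-\frac12|X|^2$ along characteristics. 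Differentiating the relations $s+1$ times and using Gronwall gives $S\in C([0,T];{\tt SQ}_s)\cap C^s([0,T]\times\RR^{3-d})$; uniqueness follows from the characteristic representation. This is classical and I would cite the standard references (Carles' book on semiclassical analysis, or the treatment in \cite{BaoLetMehatsEN}).

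\emph{Parts \eqref{theo1:pt2}--\eqref{theo1:pt3}: the amplitude equations.} The equations \eqref{eq:b1} and \eqref{eq:b2} have the schematic form $\pa_t A + \nabla_x S\cdot\nabla_x A + \tfrac12 A\,\Delta_x S = \tfrac{i\alpha}{2}\Delta_x A - iG(t,A)$, where $G$ is either $F^\gamma(t/\eps^2,\cdot)$ or $F^\gamma_{av}(\cdot)$. I would run a standard iteration scheme in $C([0,\overline T];B^m)$: define $A^{(k+1)}$ by solving the linear equation with $G(t,A^{(k)})$ frozen on the right-hand side, and estimate $\|A^{(k+1)}(t)\|_{B^m}$ via energy estimates. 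The key structural point is that the first-order transport operator $\nabla_x S\cdot\nabla_x + \tfrac12\Delta_x S$ is \emph{skew-symmetric up to lower order} on $L^2$ (the $\tfrac12\Delta_xS$ term is exactly the correction making $v\cdot\nabla + \tfrac12\DIV v$ skew-adjoint), and commuting it with $\pa_x^\kappa$, $x^k$ produces only terms controlled by $\|S\|_{{\tt SQ}_s}$ times $\|A\|_{B^m}$ because $S$ is subquadratic (so $\pa_x S$ grows at most linearly, matching the weights in $B^m$); this is where the hypothesis $s\geq m+2$ is used. The $\tfrac{i\alpha}{2}\Delta_x$ term is skew-adjoint on $L^2$ and harmless, and crucially its contribution to the $B^m$ estimate comes with a factor $\alpha\leq 1$, so the bound is uniform as $\alpha\to0$ and the scheme degenerates gracefully to the $\alpha=0$ case. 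For the nonlinearity, $B^m$ is an algebra (Remark \ref{rem:Balg}) and $V^\gamma_{\rm dip}*$ is bounded on every $H^k$ uniformly in $\gamma$ since $\widehat{V^\gamma_{\rm dip}}\in[-1/3,2/3]$; moreover $e^{\pm i\theta\calH_z}$ is an isometry of every $B^m$ (it commutes with $\calH_z$ and with $|x|$, and maps $\pa_z$-weights to themselves up to the oscillator structure), so $\|F^\gamma(\theta,\Phi)\|_{B^m}\leq C(\|\Phi\|_{B^m})$ with $C$ independent of $\theta,\gamma,\eps,\alpha$. Feeding these into the energy estimate yields $\tfrac{d}{dt}\|A^{(k+1)}\|_{B^m}\leq C(1+\|A^{(k+1)}\|_{B^m})(1+\|A^{(k)}\|_{B^m}^{2})$ or similar, hence a uniform-in-$k$ bound on a time interval $[0,\overline T]$ depending only on $\|A_0\|_{B^m}$ and $\|S\|_{{\tt SQ}_s}$ — not on $(\eps,\alpha,\gamma)$. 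Contraction of the scheme in the weaker norm $C([0,\overline T];L^2)$ (or $B^{m-2}$) gives a unique fixed point; the equation then shows $\pa_t A\in C([0,\overline T];B^{m-2})$, and a standard bootstrap upgrades weak-* to strong continuity in $B^m$. Uniqueness in the stated class follows from the same $L^2$ contraction estimate, and the uniform bound of \eqref{theo1:pt3} is exactly the uniform-in-$k$ bound passed to the limit.

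\emph{Main obstacle.} The delicate point is the uniformity of all constants with respect to $\eps$, hidden in the oscillatory argument $t/\eps^2$ of $F^\gamma$ in \eqref{eq:b1}. One must check that no $\eps^{-1}$ or $\eps^{-2}$ factor sneaks into the $B^m$ energy estimate: this works because in the filtered unknown $\beq$ the stiff term $\tfrac{1}{\eps^2}\calH_z$ has been conjugated away, leaving only the \emph{bounded} (in $B^m$, uniformly in $\theta$) nonlinearity $F^\gamma(t/\eps^2,\cdot)$, whose time-dependence, however fast, never enters the spatial estimates. The other mild subtlety is verifying that $e^{i\theta\calH_z}$ genuinely preserves the $B^m$ norm including the mixed $|x|^k\pa_z^\kappa$ weights — this follows from $[\calH_z,x]=0$ and the fact that $\calH_z$ generates a group preserving the harmonic-oscillator Sobolev scale in $z$, so one reduces to the one-dimensional oscillator where it is classical. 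Everything else is routine parabolic-type iteration made to work for a skew-symmetric (dispersive) problem via energy estimates rather than smoothing.
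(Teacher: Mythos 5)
Your proposal is correct and follows essentially the same route as the paper, which proves point (\ref{theo1:pt1}) by the Hamilton--Jacobi/characteristics result of Proposition \ref{lem:eik}, and points (\ref{theo1:pt2})--(\ref{theo1:pt3}) by a fixed-point argument combined with Gronwall, resting on exactly the ingredients you identify: the Lipschitz bounds for $F^\gamma(\theta,\cdot)$ and $F^\gamma_{av}$ in $B^m$ uniform in $(\theta,\gamma)$ (via the Fourier-multiplier bound $\widehat{V^\gamma_{\rm dip}}\in[-1/3,2/3]$, the algebra property of $B^m$ and the invariance under $e^{\pm i\theta\mathcal{H}_z}$, cf.\ Proposition \ref{prop:tameF} and Lemma \ref{lem:tameF2}) and the $\alpha$-uniform energy estimate for the transport--Schr\"odinger operator with subquadratic phase (Lemma \ref{lem:bound}). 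Your Picard iteration with contraction in a weaker norm is the same standard mechanism as the paper's Duhamel-based fixed point, so no substantive difference arises.
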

%
%
%
\subsubsection{Study of the limits $\alpha\to0$, $\eps\to 0$ and $\gamma\to0$.}
We are now able to study the behavior of $\beq$ as $\alpha\to0$, $\eps\to 0$ and $\gamma\to0$.
	%
	%
	%
	\begin{theorem}\label{theo:limit}
		Assume the hypothesis of Theorem \pref{theo:local1} true. Then, for all $(\eps,\alpha,\gamma)\in [0,1]^3$, for all $q\in(0,1)$, we have the following bounds:
		\begin{enumerate}[(i)]
			\item \label{theol:pt1}Averaging result:
			\begin{equation}
			\label{error1}
				\|\beq-\beqo\|_{L^\infty([0,\overline T];B^{m-2}(\RR^{3}))}\leq C\eps^{2}
			\end{equation}
			\item Semi-classical result:
			\begin{equation}
			\label{error2}
				\|\beq-\beqs\|_{L^\infty([0,\overline T];B^{m-2}(\RR^{3}))}\leq C\alpha
			\end{equation}
			\item Dipole-dipole interaction limit result:
			\begin{equation}
			\label{error3}
				\|\beq-\beqg\|_{L^\infty([0,\overline T];B^{m-5}(\RR^{3}))}\leq C_q\gamma^q
			\end{equation}			
			\item Global result:
			\begin{equation}
			\label{error4}
\|A^{\eps,\alpha,\gamma}-A^{0,0,0}\|_{L^\infty([0,\overline T];B^{m-5}(\RR^{3}))}\leq C_q(\eps^2+\alpha + \gamma^{q}).
			\end{equation}
		\end{enumerate}
		The constants $C$ and $C_q$ do not depend on $\alpha$, $\eps$ and $\gamma$ but $C_q$ does depend on $q$. The estimates related to the original equation \eqref{gpe1} can be summarized in the following diagram:
	\[
\xymatrix{
    A^{\eps,\alpha,\eps\sqrt{\alpha}} \ar[rrrr]^{\calO(\eps^{2}+(\sqrt{\alpha}\eps)^{q})} \ar[dddd]_{\calO(\alpha+ (\sqrt{\alpha}\eps)^{q})}\ar[rrrrdddd]^{\calO(\alpha+\eps^2+(\sqrt{\alpha}\eps)^{q})} &&&& A^{0,\alpha,0} \ar[dddd]^{\calO(\alpha)}\\
    \\
    \\
    \\
    A^{\eps,0,0} \ar[rrrr]_{\calO(\eps^2)} &&&& A^{0,0,0}
  }
  	\]
\end{theorem}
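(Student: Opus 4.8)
The plan is to prove all four bounds by one and the same scheme: a weighted energy estimate in a Sobolev space $B^k(\RR^3)$ for the difference of the two solutions at stake, closed by Gronwall's lemma and fed by the uniform bounds of Theorem~\ref{theo:local1}. Concretely, if $u$ and $v$ each solve an equation of the type \eqref{eq:b1}/\eqref{eq:b2} (possibly with different parameters and different nonlinearities), then $w:=u-v$ satisfies
\[
\pa_t w+\nabla_x S\cdot\nabla_x w+\tfrac12\,w\,\Delta_x S=\tfrac{i\alpha}{2}\Delta_x w-i\,\mathcal N+\mathcal R ,
\]
where $\mathcal N$ is the difference of the two nonlinearities evaluated at $u$ and $v$ (Lipschitz on the fixed ball of $B^m(\RR^3)$ where both solutions live, by Theorem~\ref{theo:local1} and the mapping properties of Remark~\ref{rem:Balg}), and $\mathcal R$ gathers the genuine discrepancy between the two models. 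Differentiating in $(x,z)$ up to order $k$, multiplying by the weights $|\bx|^j$ and pairing with $w$, one gets $\tfrac{d}{dt}\|w\|_{B^k}^2\le C\|w\|_{B^k}^2+C\|w\|_{B^k}\|\mathcal R\|_{B^k}$: the transport term together with $\tfrac12 w\Delta_x S$ is controlled after an integration by parts using $S\in{\tt SQ}_s$, the relevant commutators are of lower order by subquadraticity, and $\tfrac{i\alpha}{2}\Delta_x w$ contributes nothing to the real part of the energy (its commutators with the $B^k$-weights being absorbed). This is exactly the machinery of \cite{BaoLetMehatsEN}; it remains to estimate $\mathcal R$ in each of the three limits, the dipolar term being harmless wherever it only has to be propagated, since $V_{\rm dip}^\gamma*\,\cdot\,$ is a Fourier multiplier by a function bounded by $1$ (see \eqref{eq:FTUdip}, \eqref{def:Vdip}) and hence bounded on every $B^k(\RR^3)$ uniformly in $\gamma\in[0,1]$.

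For \eqref{error2}, $u=\beq$ and $v=\beqs$ solve the same equation but for the term $\tfrac{i\alpha}{2}\Delta_x$, so $\mathcal R=\tfrac{i\alpha}{2}\Delta_x\beq$, with $\|\mathcal R\|_{B^{m-2}}\le C\alpha$ by the uniform $B^m$-bound (the loss of two derivatives being this Laplacian), and Gronwall gives \eqref{error2}. For \eqref{error1}, $u=\beq$, $v=\beqo$ and $\mathcal R=-i\big(F^\gamma(t/\eps^2,\beqo)-F_{av}^\gamma(\beqo)\big)$ has zero mean over a $2\pi$-period in the fast time $\theta=t/\eps^2$ (recall the $2\pi$-periodicity of $\theta\mapsto F^\gamma(\theta,\cdot)$); introducing the bounded $2\pi$-periodic primitive $G^\gamma(\theta,\Phi)=\int_0^\theta\big(F^\gamma(s,\Phi)-F_{av}^\gamma(\Phi)\big)\,ds$, which is smooth and Lipschitz in $\Phi$ on bounded sets of $B^m$, and integrating $\int_0^t\mathcal R$ by parts in time produces boundary and remainder terms involving $\partial_t\beqo$, hence $\Delta_x\beqo\in B^{m-2}$ through \eqref{eq:b2}; one gets $\|\int_0^t\mathcal R\|_{B^{m-2}}=\mathcal O(\eps^2)$ (again a two-derivative loss), and Gronwall on the integral form of the $w$-equation yields \eqref{error1}. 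Both arguments are those of \cite{BaoLetMehatsEN} with $F$ replaced by $F^\gamma$.

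Estimate \eqref{error3} is the heart of the matter. Here $u=\beq$, $v=\beqg$, and
\[
\mathcal R=-3i\lambda_0\,e^{i\theta\calH_z}\Big(\big(V_{\rm dip}^\gamma-V_{\rm dip}^0\big)*\big|e^{-i\theta\calH_z}\beqg\big|^2\Big)\,e^{-i\theta\calH_z}\beqg ,\qquad \theta=\tfrac{t}{\eps^2}.
\]
Since $e^{\pm i\theta\calH_z}$ are isometries of every $B^k$ and $B^m$ is an algebra, bounding $\mathcal R$ reduces to a quantitative version of Remark~\ref{rem:Vdipo}, i.e. a rate for $\|(V_{\rm dip}^\gamma-V_{\rm dip}^0)*U\|$. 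Writing $a=k_x\cdot n_x$, $b=k_z\cdot n_z$ and using $|a|\le|k_x|$, $|b|\le|k_z|$, a direct computation from \eqref{def:Vdip} gives
\[
\big|\widehat{V_{\rm dip}^\gamma}(k_x,k_z)-\widehat{V_{\rm dip}^0}(k_x,k_z)\big|\le C\,\min\!\Big(1,\ \frac{\gamma\,|k_x|}{|k_z|}\Big),
\]
and interpolating this with the trivial bound $\le 1$ yields, for every $q\in(0,1)$, $\big|\widehat{V_{\rm dip}^\gamma}-\widehat{V_{\rm dip}^0}\big|\le C_q\,\gamma^{q}\,|k_x|^{q}|k_z|^{-q}$, so that by Plancherel
\[
\big\|(V_{\rm dip}^\gamma-V_{\rm dip}^0)*U\big\|_{L^2(\RR^3)}\le C_q\,\gamma^{q}\,\big\|\,|k_x|^{q}|k_z|^{-q}\,\widehat U\,\big\|_{L^2(\RR^3)}.
\]
The weight $|k_z|^{-q}$ is locally integrable near the singular set $\{k_z=0\}\subset\RR^3$ (of codimension $d$) only for $q$ small enough, which is why $q$ must stay $<1$ and why $C_q$ blows up as $q\to1$; splitting the integral over the regions $\{|k_z|<1\}$ and $\{|k_z|\ge1\}$, the singular part is absorbed via the Sobolev embedding $H^s(\RR^d_{k_z})\hookrightarrow L^\infty$ in the transversal frequencies and the factor $|k_x|^q$ via a fixed amount of $x$-regularity, while commuting the $B^k$-weights $|\bx|^j$ through the (non-smooth) convolution produces symbols homogeneous of negative degree at the origin, still integrable in $\RR^3$ at the cost of finitely many further derivatives; all told, $\|(V_{\rm dip}^\gamma-V_{\rm dip}^0)*U\|_{B^{m-5}}\le C_q\,\gamma^{q}\,\|U\|_{B^{m-2}}$. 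Taking $U=|e^{-i\theta\calH_z}\beqg|^2\in B^m$ uniformly gives $\|\mathcal R\|_{B^{m-5}}\le C_q\gamma^{q}$, and Gronwall gives \eqref{error3}.

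Finally, \eqref{error4} follows by chaining: since \eqref{error1}--\eqref{error3} hold for \emph{all} $(\eps,\alpha,\gamma)\in[0,1]^3$, use \eqref{error1} to pass from $A^{\eps,\alpha,\gamma}$ to $A^{0,\alpha,\gamma}$, then \eqref{error2} with $\eps=0$ to pass from $A^{0,\alpha,\gamma}$ to $A^{0,0,\gamma}$, then \eqref{error3} with $\eps=\alpha=0$ to pass from $A^{0,0,\gamma}$ to $A^{0,0,0}$, and add up using $B^{m-2}\hookrightarrow B^{m-5}$; the diagram is merely the specialization $\gamma=\eps\sqrt\alpha$ of \eqref{error1}--\eqref{error4}, each arrow being the concatenation of the one-parameter limits joining its endpoints. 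The only genuinely new obstacle with respect to \cite{BaoLetMehatsEN} is therefore estimate \eqref{error3}: the multiplier $\widehat{V_{\rm dip}^\gamma}-\widehat{V_{\rm dip}^0}$ does \emph{not} converge uniformly to $0$ — it stays of size one on a shrinking neighborhood of $\{k_z=0\}$ — so extracting an algebraic rate costs transversal regularity, which is precisely what produces both the restriction $q<1$ and the derivative loss ($B^{m-5}$ rather than $B^{m-2}$) in \eqref{error3} and \eqref{error4}.
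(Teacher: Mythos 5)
Your overall scheme for (i), (ii) and (iv) is the paper's: energy estimates for the difference via Lemma \ref{lem:bound}, the Lipschitz bounds of Proposition \ref{prop:tameF}, the $2\pi$-periodic primitive $\int_0^\theta(F^\gamma(s,\cdot)-F^\gamma_{av}(\cdot))\,ds$ with an integration by parts in the fast time for \eqref{error1}, the residual $\tfrac{i\alpha}{2}\Delta_x A^{\eps,\alpha,\gamma}$ for \eqref{error2}, Gronwall, and a triangle-inequality chaining for \eqref{error4}. The genuine gap is in your treatment of \eqref{error3}, which is precisely the new point of the paper (Lemma \ref{lem:limV}). After the correct pointwise bound $|\widehat{V^\gamma_{\rm dip}}-\widehat{V^0_{\rm dip}}|\le C\min\bigl(1,\gamma|k_x|/|k_z|\bigr)$, you interpolate to $C_q\gamma^q|k_x|^q|k_z|^{-q}$ and pass to Plancherel. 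This interpolation manufactures a singularity at $\{k_z=0\}$ that the multiplier does not have, and your own absorption mechanism (sup/Sobolev embedding in the transverse frequencies) requires $\int_{|k_z|<1}|k_z|^{-2q}\,dk_z<\infty$, i.e. $q<d/2$, not $q<1$. For cigar-shaped condensates ($d=2$) this happens to give all $q<1$, but for disk-shaped condensates ($d=1$, $k_z\in\RR$) your argument caps at $q<1/2$ and cannot reach the range $q\in(0,1)$ claimed in the theorem; no H\"older rearrangement of the weight $|k_z|^{-q}$ against $|\widehat U|^2$ repairs this, because $|k_z|^{-2q}$ is simply not integrable near $0$ in one dimension once $q\ge 1/2$.

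The paper avoids this loss by never replacing the kernel by a homogeneous bound: in Lemma \ref{lem:limV} one estimates $\|(\widehat{V^\gamma_{\rm dip}}-\widehat{V^0_{\rm dip}})\widehat v\|_{L^1}$ (hence the convolution in $L^\infty$, which also sidesteps your hand-waving about commuting the $|\bx|^j$ weights through the convolution: the weight is put on the unconvolved factor), splitting off the cross term $\widehat{W^\gamma_2}=2\gamma(k_x\cdot n_x)(k_z\cdot n_z)/(\gamma^2|k_x|^2+|k_z|^2)$ and applying H\"older so that the \emph{entire} kernel shape $\bigl(|k_z|/(\gamma|k_x|)\bigr)\bigl(1+|k_z|^2/(\gamma|k_x|)^2\bigr)^{-1}$ — bounded at $k_z=0$ and decaying at infinity — sits in the $L^{p_2}$ factor; the change of variables $k=k_z/(\gamma|k_x|)$ shows that this factor is $\gamma$-independent as soon as $p_2>d$, while the complementary factor releases $\gamma^{d/p_2}$, i.e. any rate $q=d/p_2<1$ for both $d=1,2$, the polynomial weights $(1+|k_x|^2)^{q_2+q_3}(1+|k_z|^2)^{p_3}$ placed on $\widehat v$ accounting for the five-derivative loss. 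If you want to salvage your Plancherel route you must keep the $\min$ structure and work with the exact kernel in this way; as written, the step "$q$ must stay $<1$" is not what your computation delivers in the case $d=1$. (Incidentally, when $n_x=0$ or $n_z=0$ the troublesome cross term vanishes, which is exactly why the paper's remark then allows $q=1$ for $d=1$ and $q\in[1,2)$ for $d=2$.)
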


\begin{remark}
	The case $\lambda_0 = 0$ has already been studied by the authors in \cite{BaoLetMehatsEN} where we got estimates that are similar to \eqref{error1} and \eqref{error2}.
\end{remark}

\begin{remark}
	Assume that either $n_x = 0$ or $n_z = 0$. Then, for all $(\eps,\alpha)\in (0,1]^2$, for all $q$ such that
		\[
			\left\{\begin{array}{ll}
				q = 1 & \mbox{ if } d = 1,\\
				q \in [1,2)& \mbox{ if } d = 2,
			\end{array}\right.
		\]
	 	we get the same conclusion as in Theorem \ref{theo:limit}.
\end{remark}

The following immediate corollary gives a more accurate approximation of $A^{\eps,\alpha,\eps\sqrt{\alpha}}$ than $A^{0,0,0}$. This result can be useful for numerical simulations and has to be related to the ones of Ben Abdallah et al. \cite{Benabdallahsecondorder}.
\begin{corollary}
Assume the hypothesis of Theorem \pref{theo:local1} true. Then, for all $(\eps,\alpha)\in [0,1]^2$, we have the following bound:
\[
	\|A^{\eps,\alpha,\eps\sqrt{\alpha}}- A^{0,0,\eps\sqrt{\alpha}}\|_{L^\infty([0,\overline T];B^{m-2}(\RR^{3}))}\leq C(\eps^2 + \alpha)
\]
where $C>0$ does not depend on $\eps$ or $\alpha$.
\end{corollary}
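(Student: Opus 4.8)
The plan is to deduce this estimate directly from Theorem~\ref{theo:limit}, by inserting an intermediate term and applying the triangle inequality in $L^\infty([0,\overline T];B^{m-2}(\RR^3))$. Fix $(\eps,\alpha)\in[0,1]^2$ and set $\gamma=\eps\sqrt{\alpha}$. Since $\eps\in[0,1]$ and $\sqrt\alpha\in[0,1]$, we have $\gamma\in[0,1]$, hence $(\eps,\alpha,\gamma)\in[0,1]^3$, and every estimate of Theorem~\ref{theo:limit} applies for this choice of $\gamma$, with constants uniform in $\eps$, $\alpha$ and $\gamma$.

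First I would write
\[
  \big\|A^{\eps,\alpha,\eps\sqrt\alpha}-A^{0,0,\eps\sqrt\alpha}\big\|_{L^\infty([0,\overline T];B^{m-2})}
  \le \big\|A^{\eps,\alpha,\eps\sqrt\alpha}-A^{0,\alpha,\eps\sqrt\alpha}\big\|_{L^\infty([0,\overline T];B^{m-2})}
  + \big\|A^{0,\alpha,\eps\sqrt\alpha}-A^{0,0,\eps\sqrt\alpha}\big\|_{L^\infty([0,\overline T];B^{m-2})}.
\]
The first term on the right-hand side is bounded by $C\eps^2$ thanks to the averaging estimate \eqref{error1} of Theorem~\ref{theo:limit}(\ref{theol:pt1}) applied with $\gamma=\eps\sqrt\alpha$. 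The second term is bounded by $C\alpha$ thanks to the semi-classical estimate \eqref{error2} of Theorem~\ref{theo:limit} applied with $\eps$ replaced by $0$ and $\gamma=\eps\sqrt\alpha$ (here $A^{0,\alpha,\gamma}$ and $A^{0,0,\gamma}$ are the solutions of the averaged equation \eqref{eq:b2} with parameters $\alpha$ and $0$, respectively). Summing the two contributions gives $\|A^{\eps,\alpha,\eps\sqrt\alpha}-A^{0,0,\eps\sqrt\alpha}\|_{L^\infty([0,\overline T];B^{m-2})}\le C(\eps^2+\alpha)$, which is the desired bound.

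There is essentially no obstacle here: the corollary is a bookkeeping consequence of the two estimates \eqref{error1} and \eqref{error2}, both of which are stated in the $B^{m-2}$ norm with constants uniform in all three parameters, so freezing $\gamma=\eps\sqrt\alpha$ changes nothing. The only points worth double-checking are that these two estimates are genuinely uniform in $\gamma$ as well — which is part of the statement of Theorem~\ref{theo:limit} — and that neither of them loses more regularity than $B^{m-2}$; this is why the dipole-dipole interaction estimate \eqref{error3}, which involves only the weaker norm $B^{m-5}$, plays no role. Compared with the global estimate \eqref{error4}, the interest of this corollary is precisely that it does \emph{not} send $\gamma=\eps\sqrt\alpha$ to $0$: it retains the genuine nonlocal dipolar term $V_{\rm dip}^{\eps\sqrt\alpha}$ instead of replacing it by the local operator $V_{\rm dip}^{0}$, thereby giving a sharper, $\gamma$-dependent approximation of $A^{\eps,\alpha,\eps\sqrt\alpha}$ with the clean rate $\eps^2+\alpha$ and no loss of derivatives.
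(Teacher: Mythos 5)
Your proof is correct and is exactly the argument the paper intends: the corollary is stated as an immediate consequence of Theorem \ref{theo:limit}, obtained by the triangle inequality through the intermediate solution $A^{0,\alpha,\eps\sqrt{\alpha}}$, using the averaging estimate \eqref{error1} (with $\gamma=\eps\sqrt{\alpha}$ fixed) and the semi-classical estimate \eqref{error2} (at $\eps=0$), both of which hold in $B^{m-2}$ with constants uniform in all three parameters. Your remarks on uniformity in $\gamma$ and on why \eqref{error3} (and its $B^{m-5}$ loss) is not needed match the paper's reading of the diagram.
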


The following proposition concerns the special case of an initial data polarized on one mode of $\mathcal{H}_z$. It generalizes the case studied by Bao, Ben Abdallah and Cai \cite[Theorems 5.1 and 5.5]{BaoBenCai} where the initial data was taken on the ground state of $\mathcal{H}_z$.
\begin{proposition}\label{res:0mode}
	Let $k\in \NN$.  Assume the hypothesis of Theorem \ref{theo:local1} true. Assume also that
	\[
		A_0(x,z) = a_0(x)\omega_k(z),\quad (x,z)\in \RR^3
	\]	
	where $\omega_k$ is defined in Remark \ref{eq:mode0}. Then, the function $\beqo$ stays polarized on the mode $\omega_k$ {\it i.e.}
	\[
		\beqo(t,x,z) = \Leqo(t,x)\omega_k(z) \mbox{ for all }z\in \RR^d.
	\]
	Here, $\Leqo$ is the solution of
	\bea\label{eq:propB}
		\pa_t \Leqo +\nabla_x S \cdot \nabla_x \Leqo  +\frac{1}{2} \Leqo \Delta_x S=\frac{i\alpha\Delta_x}{2}\Leqo-iG^{\gamma,k}(\Leqo),\\
		\nonumber\Leqo(0,x) = a_0(x),\quad x\in \RR^{3-d}
	\eea
	where
	\[
		G^{\gamma,k}(u)(x) =  u(x)\int_{\RR^d}|\omega_k(z)|^k\(\sigma + 3\lambda_0V^\gamma_{\rm dip}*|\omega_ku|^2(x,z)\)dz.
	\]
	 Let	\[
	\begin{cases}
		q=1& \mbox{ if } d = 1\\
		q\in[1,2)& \mbox{ if } d = 2,
	\end{cases}
	\]
	we have moreover the following bound for all $\alpha\in[0,1]$ :
\[
	\|A^{0,\alpha,\gamma}-A^{0,\alpha,0}\|_{L^\infty([0,\overline T];B^{m-5}(\RR^{3}))}\leq C_q\gamma^q
\]
where $C_q$ does not depend on $\alpha$ but depends on $q$.
Hence, we obtain that
\[
	\|A^{\eps,\alpha,\eps\sqrt{\alpha}}- A^{0,0,0}\|_{L^\infty([0,\overline T];B^{m-5}(\RR^{3}))}\leq C(\eps^2 + \alpha + \(\eps\sqrt{\alpha}\)^q)
\]
and for $\alpha\in(0,1]$ fixed
\bea \label{eq:resEnergy}
	\|\Psi^{\eps,\alpha,\eps\sqrt{\alpha}}- \Psi^{0,\alpha,0}\|_{L^\infty([0,\overline T];B^{m-5}(\RR^{3}))}\leq C\eps^q.
\eea
\end{proposition}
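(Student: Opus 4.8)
The plan is to prove three things in turn: (a) the averaged flow \eqref{eq:b2} preserves polarization on $\omega_k$, so that $\beqo=\Leqo\,\omega_k$; (b) the improved rate as $\gamma\to0$, which reduces to a Fourier-multiplier estimate resting on a parity cancellation; (c) the two corollaries, by combining (b) with Theorem~\ref{theo:limit} and undoing the filtering. For (a): $\omega_k$ being an eigenvector of $\calH_z$, say $\calH_z\omega_k=\mu_k\omega_k$ with $\mu_k\in\NN$, the quantity $|e^{-i\theta\calH_z}(u\,\omega_k)|^2=|u|^2|\omega_k|^2$ does not depend on $\theta$, so \eqref{def:F} gives
\[
	F^\gamma(\theta,u\,\omega_k)=e^{-i\mu_k\theta}\,e^{i\theta\calH_z}\big(W_u\,u\,\omega_k\big),\qquad W_u:=\sigma|u|^2|\omega_k|^2+3\lambda_0\,V_{\rm dip}^{\gamma}*(|u|^2|\omega_k|^2).
\]
Expanding $W_u(x,\cdot)\,\omega_k$ in an eigenbasis of $\calH_z$ and averaging the $2\pi$-periodic map $\theta\mapsto F^\gamma(\theta,u\,\omega_k)$ kills every component whose eigenvalue is $\ne\mu_k$; when that eigenspace is one-dimensional — automatic for $d=1$, and for $d=2$ after choosing the basis adapted to the geometry (or restricting to such $k$) — one gets $F^\gamma_{av}(u\,\omega_k)=G^{\gamma,k}(u)\,\omega_k$ with $G^{\gamma,k}(u):=\langle F^\gamma_{av}(u\,\omega_k),\omega_k\rangle_{L^2_z}$, the reduced nonlinearity of \eqref{eq:propB}. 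Let $\Pi$ be the $L^2_z$-projector onto $\CC\,\omega_k$, acting only in $z$: it commutes with all the $x$-operators in \eqref{eq:b2}, and $(1-\Pi)F^\gamma_{av}$ vanishes on the range of $\Pi$. Hence $v:=(1-\Pi)\beqo$ solves the same transport equation with source $-i(1-\Pi)\big(F^\gamma_{av}(\Pi\beqo+v)-F^\gamma_{av}(\Pi\beqo)\big)=O(\|v\|_{L^2})$ (local Lipschitz bound on $F^\gamma_{av}$ plus the uniform $B^m$-bound of Theorem~\ref{theo:local1}); since $v(0)=0$, a Gr\"onwall estimate on $\tfrac{d}{dt}\|v\|_{L^2}^2$ forces $v\equiv0$ on $[0,\overline T]$, so $\beqo=\Leqo\,\omega_k$ with $\Leqo:=\langle\beqo,\omega_k\rangle_{L^2_z}$, and projecting \eqref{eq:b2} onto $\omega_k$ shows $\Leqo$ solves \eqref{eq:propB}.

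For (b): since $\beqog=B^{\alpha,0}\,\omega_k$ is polarized as well, I would estimate $\beqo-\beqog$ directly in $B^{m-5}(\RR^3)$; a standard energy estimate for the difference combined with Gr\"onwall's lemma reduces the matter (the Lipschitz part of $F^\gamma_{av}(\beqo)-F^0_{av}(\beqog)$ being absorbed) to the consistency bound $\sup_t\|(F^\gamma_{av}-F^0_{av})(\beqog(t))\|_{B^{m-5}}\le C_q\gamma^q$, and by (a) $(F^\gamma_{av}-F^0_{av})(\beqog)=3\lambda_0\big(B^{\alpha,0}\,T^\gamma(|B^{\alpha,0}|^2)\big)\omega_k$, where $T^\gamma$ is the $x$-Fourier multiplier with symbol $m^\gamma(k_x)-m^0(k_x)$, $m^\gamma(k_x):=(2\pi)^{-d}\int_{\RR^d}\widehat{V^\gamma_{\rm dip}}(k_x,k_z)\,|\widehat h(k_z)|^2\,dk_z$ and $h:=|\omega_k|^2$. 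The key computation uses \eqref{def:Vdip}: with $a=\gamma\,k_x\cdot n_x$, $b=k_z\cdot n_z$, $p=\gamma^2|k_x|^2$, $q_0=|k_z|^2$,
\[
	\widehat{V^\gamma_{\rm dip}}(k_x,k_z)-\widehat{V^0_{\rm dip}}(k_z)=\frac{a^2}{p+q_0}+\frac{2ab}{p+q_0}-\frac{b^2p}{(p+q_0)q_0},
\]
and the middle, $O(\gamma)$, term is odd in $k_z$, hence integrates to zero against the even weight $|\widehat h|^2$, while the two remaining terms are $O\big(\min(1,\gamma^2|k_x|^2/|k_z|^2)\big)$. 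Since $\widehat h$ is Schwartz, splitting the $k_z$-integral at $|k_z|=\gamma|k_x|$ yields $|m^\gamma(k_x)-m^0(k_x)|\le C\gamma|k_x|$ if $d=1$, and $\le C_q(\gamma|k_x|)^q$ for any $q<2$ if $d=2$ (the $d=2$ case carries a borderline logarithm, whence $q<2$ but not $q=2$); thus $|m^\gamma(k_x)-m^0(k_x)|\le C_q\gamma^q\langle k_x\rangle^q$ and, by the same argument, for the needed $x$-derivatives of the symbol. Feeding this into the estimate of $u\,T^\gamma(|u|^2)$ costs at most two $x$-derivatives on $u$, hence stays within the five-derivative loss already present in estimate~\eqref{error3}, and with the uniform $B^m$-bound on $B^{\alpha,0}$ one gets the consistency bound, then $\|\beqo-\beqog\|_{L^\infty([0,\overline T];B^{m-5})}\le C_q\gamma^q$.

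For (c): the global bound follows from the telescoping $\beq-\beqoog=(A^{\eps,\alpha,\eps\sqrt\alpha}-A^{0,\alpha,\eps\sqrt\alpha})+(A^{0,\alpha,\eps\sqrt\alpha}-A^{0,\alpha,0})+(A^{0,\alpha,0}-A^{0,0,0})$, bounded respectively by $C\eps^2$ (averaging estimate~\eqref{error1}), $C_q(\eps\sqrt\alpha)^q$ (part~(b) with $\gamma=\eps\sqrt\alpha$) and $C\alpha$ (semiclassical estimate~\eqref{error2} at $\eps=\gamma=0$). For \eqref{eq:resEnergy}, undoing the filtering gives $\Psi^{\eps,\alpha,\eps\sqrt\alpha}-\Psi^{0,\alpha,0}=e^{-it\calH_z/\eps^2}e^{iS/\alpha}(A^{\eps,\alpha,\eps\sqrt\alpha}-A^{0,\alpha,0})$; now $e^{-it\calH_z/\eps^2}$ commutes with the full harmonic-oscillator operator $-\Delta+|x|^2+|z|^2$ (whose powers generate the $B^m$-norms) and is unitary on $L^2$, hence bounded on $B^{m-5}(\RR^3)$ uniformly in $(t,\eps)$, while $e^{iS/\alpha}$ is bounded on $B^{m-5}$ for fixed $\alpha$; combining with \eqref{error1} and (b) gives, for fixed $\alpha$, the bound $C(\eps^2+(\eps\sqrt\alpha)^q)\le C\eps^q$.

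The main obstacle is the consistency estimate in (b): spotting and using the parity cancellation of the $O(\gamma)$ symbol $2ab/(p+q_0)$ — the feature special to the polarized setting (and, away from it, to $n_x=0$ or $n_z=0$) that upgrades the exponent from $q<1$ in Theorem~\ref{theo:limit} to $q=1$ (resp. $q<2$) — and then tracking the split-integral estimate, in particular the borderline logarithm when $d=2$. A secondary point is, for $d=2$, selecting the eigenbasis of $\calH_z$ so that the polarization of (a) genuinely persists despite the spectral multiplicity.
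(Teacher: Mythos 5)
Your proposal is correct and follows essentially the same route as the paper: preservation of polarization on the mode $\omega_k$, an improved consistency estimate for $F^\gamma_{av}-F^0_{av}$ on polarized data coming from the cancellation of the cross term $W^\gamma_2$ (odd in $z$, equivalently odd in $k_z$, against the even weight $|\omega_k|^2$), and then the energy/Gronwall argument of the proof of \eqref{error3} together with the telescoping through $A^{0,\alpha,\eps\sqrt{\alpha}}$ and $A^{0,\alpha,0}$ and the unitarity of the filtering operators. The only difference is in implementation: the paper encapsulates the cancellation in Lemma \ref{lem:limV3} (physical-space parity of $a|\omega_k|^2\,W_2^\gamma*|a\omega_k|^2$) and reuses Step 1 of Lemma \ref{lem:limV} to treat the remaining term $W_1^\gamma$, whereas you do the same cancellation on the Fourier side and bound the remainder via an $x$-Fourier multiplier and a split integral, obtaining the same exponents $q=1$ for $d=1$ and $q<2$ for $d=2$ (your explicit caveat about spectral multiplicity of $\mathcal{H}_z$ when $d=2$ is a point the paper passes over silently).
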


\begin{remark}
	Let us notice that by Remark \ref{rem:Vdipo}, the nonlinearity $G^{\gamma,k}$ of equation \eqref{eq:propB} becomes a local cubic nonlinearity when $\gamma = 0$
	\[
		G^{0,k}(u)(x) =  \( \frac{n_z^2-d}{3d}\|\omega_k\|^4_{L^4(\RR^d)}\)|u(x)|^2u(x), \quad \mbox{ for all }x\in\RR^{3-d}.
	\]
	
\end{remark}

The paper is organized as follows. In Section \ref{sec:2}, we study some properties of the dipolar term that are needed in the proofs of Theorems \ref{theo:local1}, \ref{theo:limit} and  \ref{res:0mode} given in Section \ref{sec:3}. 

\section{Study of the dipolar term}\label{sec:2}

Let us define for $\theta\in \RR$, $\gamma\in[0,1]$ and $\Phi\in L^2(\RR^3)$
\be\label{def:F1}
	\begin{array}{l}
	F_1(\theta,\Phi) = e^{i\theta\mathcal{H}_z}\(\sigma |e^{-i\theta \mathcal{H}_z}\Phi|^2e^{-i\theta \mathcal{H}_z}\Phi\),
	\vspace{.2cm}\\
	F_{1,av}(\Phi) = \frac{1}{2\pi}\int_0^{2\pi}F_1(\theta,\Phi)d\theta,\vspace{.2cm}\\
	F_2^\gamma(\theta,\Phi) = 3\lambda_0e^{i\theta\mathcal{H}_z}\(V_{\rm dip}^\gamma *|e^{-i\theta \mathcal{H}_z}\Phi|^2\)e^{-i\theta \mathcal{H}_z}\Phi,\vspace{.2cm}\\
	F_{2,av}^\gamma(\Phi) = \frac{1}{2\pi}\int_0^{2\pi}F_2^\gamma(\theta,\Phi)d\theta
	\end{array}
\ee
so that
\[
	F^\gamma  = F_1 + F_2^\gamma\mbox{ and }F_{av}^\gamma = F_{1,av} + F_{2,av}^\gamma.
\]

In order to prove the uniform well-posedness of the nonlinear equations \eqref{eq:b1} and \eqref{eq:b2}, we will need Lipschitz estimates for $F^\gamma(\theta,\cdot)$ defined by \eqref{def:F} and $F_{av}^\gamma(\cdot)$ defined by \eqref{def:Fav}. We only study here the dipolar terms $F_2^\gamma$ and $F_{2,av}^\gamma$ since the cubic ones $F_1(\theta,\Phi)$ and $F_{1,av}(\Phi)$ have already been studied in \cite[Lemma $2.7.$]{BaoLetMehatsEN} (see also \cite[Proposition 2.5]{BenAbdallah2008154}, \cite[Lemma 4.10.2]{cazenave2003semilinear} or \cite[Lemma 1.24]{bookRemi}).

\subsection{Some properties of $F^\gamma_2(\theta,\cdot)$ and $F_{2,av}^\gamma(\cdot)$}

 Using the fact that $\widehat{V^\gamma_{\rm dip}}$ takes its values in $[-1/3, 2/3]$ (Remark \ref{rem:FourierTr}), we get the following lemma.

\begin{lemma}\label{lem:K}
Introduce the convolution operator
\bee
	K^\gamma : u\in H^m(\RR^{3})\longmapsto V^\gamma_{\rm dip}*u\in H^m(\RR^{3})
\eee
for $\gamma\in[0,1]$ and $m\in\NN$ where $V^\gamma_{\rm dip}$ is defined by \eqref{def:Vdip}. Then, we get for all $u\in H^m(\RR^{3})$
\[
	\|K^\gamma u\|_{H^m}\leq \frac{2}{3}\| u \|_{H^m}.
\]
\end{lemma}
The following lemma gives Lipschitz estimates for the dipolar terms.
\begin{lemma}\label{lem:tameF2}
For all $m\geq 2$ and $M>0$, there exists $C>0$ such that
\bee
	\|F_{2,av}^\gamma(u)-F_{2,av}^\gamma(v)\|_{B^m}&\leq CM^2\|u-v\|_{B^m}\\
	\left\|F_2^\gamma\left(\theta,u\right)-F_2^\gamma\left(\theta,v\right)\right\|_{B^m}&\leq CM^2\|u-v\|_{B^m},
\eee
for all $u,v\in B^m(\RR^{3})$ satisfying $\|u\|_{B^m}\leq M$, $\|v\|_{B^m}\leq M$, for all $\theta\in \RR$ and for all $\gamma\in[0,1]$.
\end{lemma}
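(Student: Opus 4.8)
The plan is to reduce the estimate for $F_2^\gamma$ and $F_{2,av}^\gamma$ to the boundedness of the convolution operator $K^\gamma$ (Lemma \ref{lem:K}) together with the algebra structure of $B^m(\RR^3)$ (Remark \ref{rem:Balg}). First I would observe that it suffices to treat $F_2^\gamma(\theta,\cdot)$: the averaged term $F_{2,av}^\gamma$ is obtained by integrating over $\theta\in[0,2\pi]$ and dividing by $2\pi$, so a bound uniform in $\theta$ passes to the average without loss. Next, the key algebraic point is that $e^{i\theta\mathcal H_z}$ is a unitary group on $L^2(\RR^3)$ which, crucially, commutes with the convolution operator $K^\gamma$ in the $x$-variable; more to the point, $e^{i\theta\mathcal H_z}$ is an isometry of every Sobolev-type space $B^m(\RR^3)$ because $\mathcal H_z$ is essentially self-adjoint with spectrum $\NN$ and the harmonic-oscillator Sobolev scale is exactly adapted to it (this is the content of the estimate \eqref{eq:Balg} and the references in Remark \ref{rem:Balg}). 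So I would not lose anything by conjugating: setting $\widetilde u = e^{-i\theta\mathcal H_z}u$, $\widetilde v = e^{-i\theta\mathcal H_z}v$, one has $\|\widetilde u\|_{B^m}=\|u\|_{B^m}\le M$, and
\[
	F_2^\gamma(\theta,u)-F_2^\gamma(\theta,v) = 3\lambda_0\, e^{i\theta\mathcal H_z}\Big( \big(V_{\rm dip}^\gamma*|\widetilde u|^2\big)\widetilde u - \big(V_{\rm dip}^\gamma*|\widetilde v|^2\big)\widetilde v \Big),
\]
so that $\|F_2^\gamma(\theta,u)-F_2^\gamma(\theta,v)\|_{B^m} = 3|\lambda_0|\,\big\| (K^\gamma|\widetilde u|^2)\widetilde u - (K^\gamma|\widetilde v|^2)\widetilde v\big\|_{B^m}$.

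The remaining step is a routine telescoping plus product estimates. Writing the difference as
\[
	(K^\gamma|\widetilde u|^2)(\widetilde u-\widetilde v) + \big(K^\gamma(|\widetilde u|^2-|\widetilde v|^2)\big)\widetilde v
\]
and using that $B^m(\RR^3)$ is an algebra for $m\ge 2$ (so $\|fg\|_{B^m}\le C\|f\|_{B^m}\|g\|_{B^m}$), together with $\|K^\gamma w\|_{H^m}\le \tfrac23\|w\|_{H^m}$ from Lemma \ref{lem:K} and the elementary bound $\| \,|\widetilde u|^2-|\widetilde v|^2\|_{B^m}\le C(\|\widetilde u\|_{B^m}+\|\widetilde v\|_{B^m})\|\widetilde u-\widetilde v\|_{B^m}$, one obtains
\[
	\big\| (K^\gamma|\widetilde u|^2)\widetilde u - (K^\gamma|\widetilde v|^2)\widetilde v\big\|_{B^m} \le C\big(\|\widetilde u\|_{B^m}^2 + \|\widetilde u\|_{B^m}\|\widetilde v\|_{B^m} + \|\widetilde v\|_{B^m}^2\big)\|\widetilde u-\widetilde v\|_{B^m}\le 3CM^2\|u-v\|_{B^m},
\]
which is the claimed inequality (absorbing $3|\lambda_0|$ into the constant $C$). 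The bound is visibly independent of $\theta$ and of $\gamma\in[0,1]$, since Lemma \ref{lem:K} is uniform in $\gamma$, and then the average $F_{2,av}^\gamma$ inherits it.

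One technical subtlety deserves care, and I expect it to be the only real obstacle: Lemma \ref{lem:K} is stated for the $H^m$ norm, whereas the conclusion here is in the $B^m$ norm, which also controls the weighted quantities $(|x|^m+|z|^m)u$. The convolution $K^\gamma$ acts only through a bounded Fourier multiplier $\widehat{V_{\rm dip}^\gamma}\in[-1/3,2/3]$ and therefore commutes with $H^m$ derivatives, but it does \emph{not} commute with multiplication by $|x|^m$ or $|z|^m$. To handle this one writes $|\bx|^k\partial^\kappa \big((K^\gamma w)\,h\big)$, distributes the derivatives by Leibniz, and for each term in which the weight hits the factor $K^\gamma w$ uses the commutator identity $|\bx|^k K^\gamma w = K^\gamma(|\bx|^k w) + [\,|\bx|^k,K^\gamma\,]w$; the commutator with a Fourier multiplier costs (spatial) derivatives of $\widehat{V_{\rm dip}^\gamma}$, which are again bounded away from the origin — one checks directly from \eqref{def:Vdip} that $\widehat{V_{\rm dip}^\gamma}$ and its derivatives of order $\le m$ are uniformly bounded on $\{|k|\ge \delta\}$, while near $k=0$ the function is bounded and the homogeneity-zero structure $\tfrac{(\gamma k_x\cdot n_x+k_z\cdot n_z)^2}{|\gamma k_x|^2+|k_z|^2}$ makes $K^\gamma$ still map $B^m\to B^m$ boundedly with norm uniform in $\gamma\in[0,1]$. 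In practice it is cleaner to absorb all of this into a single auxiliary statement, namely that $K^\gamma:B^m(\RR^3)\to B^m(\RR^3)$ is bounded uniformly in $\gamma\in[0,1]$ — a mild strengthening of Lemma \ref{lem:K} proved by the same Fourier-multiplier argument using \eqref{eq:Balg} — after which the telescoping argument above goes through verbatim with $H^m$ replaced by $B^m$ throughout.
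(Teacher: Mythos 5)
Your reduction to $\theta=0$ via the $B^m$-isometries $e^{\pm i\theta\mathcal H_z}$, the telescoping of the difference, and the passage to the average are exactly the paper's steps. The gap is in the step you yourself flag as the ``only real obstacle'': your proposed fix --- upgrading Lemma \ref{lem:K} to ``$K^\gamma:B^m(\RR^3)\to B^m(\RR^3)$ bounded uniformly in $\gamma$'' --- is not a mild strengthening and is in fact false in general. The multiplier $\widehat{V_{\rm dip}^\gamma}$ is homogeneous of degree zero, hence discontinuous at $k=0$, and its derivatives blow up like negative powers of $|k|$ there; so the commutator $[\,|\bx|^k,K^\gamma\,]$ is \emph{not} controlled by ``derivatives of the multiplier bounded away from the origin''. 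Equivalently, in physical space $V_{\rm dip}^\gamma$ is (a Dirac mass plus) a Calder\'on--Zygmund kernel of size $|\bx|^{-3}$, so for a generic Schwartz function $w$ with $\int w\neq 0$ one has $K^\gamma w(\bx)\sim c(\bx/|\bx|)|\bx|^{-3}$ at infinity, and $|\bx|^m K^\gamma w\notin L^2$ as soon as $m\geq 2$; thus $K^\gamma$ does not even map $B^m$ into $B^m$. Consequently your final display, which applies the $B^m$ algebra property to the product $(K^\gamma|\widetilde u|^2)(\widetilde u-\widetilde v)$, cannot be justified as written, since $K^\gamma|\widetilde u|^2$ need not lie in $B^m$.

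The paper avoids this entirely by never placing the weight on the convolution factor: the $B^m$ norm is split into its $H^m$ part and its weighted part, the $H^m$ part is handled by the $H^m$ algebra property together with Lemma \ref{lem:K} (which commutes harmlessly with derivatives), and in the weighted part the factor $K^\gamma(|u|^2)$ is estimated only in $L^\infty$ (via $\|K^\gamma(|u|^2)\|_{L^\infty}\leq C\|K^\gamma(|u|^2)\|_{H^m}\leq C\||u|^2\|_{H^m}$, using $m\geq 2$), while the full weight $|\bx|^m$ is put on the other factor $u-v$ (respectively $v$), which does belong to $B^m$. In other words, the correct product estimate to use is of the form $\|fg\|_{B^m}\leq C\|f\|_{H^m}\|g\|_{B^m}$, not $\|fg\|_{B^m}\leq C\|f\|_{B^m}\|g\|_{B^m}$ with $f=K^\gamma(|u|^2)$. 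If you replace your commutator step by this asymmetric estimate, the rest of your argument (telescoping, uniformity in $\theta$ and $\gamma$, averaging) goes through and coincides with the paper's proof.
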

\begin{proof}
Let us fix $\gamma\in[0,1]$,  $u,v\in B^m(\RR^{3})$ satisfying $\|u\|_{B^m}\leq M$, $\|v\|_{B^m}\leq M$. To begin, assume that $\theta = 0$, then we get that
\bee
	&&\left\|F_2^\gamma\left(0,u\right)-F_2^\gamma\left(0,v\right)\right\|_{B^m}=  3\lambda_0\left\|K^\gamma(|u|^2)u-K^\gamma(|v|^2)v\right\|_{B^m}\\
	&&\qquad\leq 3\lambda_0\left\|K^\gamma(|u|^2)(u-v)\right\|_{B^m}+ 3\lambda_0\left\|(K^\gamma(|u|^2)-K^\gamma(|v|^2))v\right\|_{B^m}.
\eee
Lemma \ref{lem:K}  and Remark \ref{rem:Balg} ensure that
\bee
	 \left\|K^\gamma(|u|^2)(u-v)\right\|_{H^m}&\leq&  C\left\|K^\gamma(|u|^2)\right\|_{H^m}\left\|u-v\right\|_{H^m}\leq C \left\||u|^2\right\|_{H^m}\left\|u-v\right\|_{H^m}\\
	 &\leq&C \left\|u\right\|_{H^m}^2\left\|u-v\right\|_{H^m}\leq CM^2 \left\|u-v\right\|_{B^m}.
\eee
We also have
\bee
	&&\left\||\bx|^mK^\gamma(|u|^2)(u-v)\right\|_{L^2}\leq  C\left\|K^\gamma(|u|^2)\right\|_{L^\infty}\left\||\bx|^m(u-v)\right\|_{L^2}\\
	&&\qquad\leq C \left\|K^\gamma(|u|^2)\right\|_{H^m}\left\|u-v\right\|_{B^m} \leq CM^2 \left\|u-v\right\|_{B^m}.
\eee
For the second term, we get
\bee
	 &&\left\|(K^\gamma(|u|^2)-K^\gamma(|v|^2))u\right\|_{H^m}\leq C \left\|K^\gamma(|u|^2-|v|^2)\right\|_{H^m}\left\|u\right\|_{H^m}\\
	 &&\qquad\leq C \left\||u|^2-|v|^2\right\|_{H^m}\left\|u\right\|_{H^m}\leq C M^2\left\|u-v\right\|_{B^m}
\eee
and
\bee
	 &&\left\||\bx|^m(K^\gamma(|u|^2)-K^\gamma(|v|^2))u\right\|_{L^2}\leq C \left\|K^\gamma(|u|^2-|v|^2)\right\|_{L^\infty}\left\||\bx|^mu\right\|_{L^2}\\
	 &&\qquad\leq C \left\||u|^2-|v|^2\right\|_{H^m}\left\|u\right\|_{B^m}\leq CM^2 \left\|u-v\right\|_{B^m}.
\eee
This gives us
\bee
	\left\|F_2^\gamma\left(0,u\right)-F_2^\gamma\left(0,v\right)\right\|_{B^m}\leq CM^2 \left\|u-v\right\|_{B^m}
\eee
where C depends on $m$ but is independent of $\gamma$, $u$ and $v$. Since $e^{\pm i\theta \mathcal{H}_z}$ are isometries of $B^m$, we get for $\theta \in \RR$
\bee
	&&	\left\|F_2^\gamma\left(\theta,u\right)-F_2^\gamma\left(\theta,v\right)\right\|_{B^m}=\left\|e^{i\theta \mathcal{H}_z}\(F_2^\gamma\left(0,e^{-i\theta \mathcal{H}_z}u\right)-F_2^\gamma\left(0,e^{-i\theta \mathcal{H}_z}v\right)\)\right\|_{B^m}\\
	&&\qquad\leq \left\|F_2^\gamma\left(0,e^{-i\theta \mathcal{H}_z}u\right)-F_2^\gamma\left(0,e^{-i\theta \mathcal{H}_z}v\right)\right\|_{B^m}\leq C M^2\left\|e^{-i\theta \mathcal{H}_z}\(u-v\)\right\|_{B^m}\\
	&&\qquad \leq C M^2\left\|u-v\right\|_{B^m}
\eee
and
	\begin{align*}
		\left\|F_{2,av}^\gamma\left(u\right)-F_{2,av}^\gamma\left(v\right)\right\|_{B^m}&= \left\|\frac{1}{2\pi}\int_0^{2\pi}\left(F_2^\gamma\left(\theta,u\right)-F_2^\gamma\left(\theta,v\right)\right)d\theta\right\|_{B^m}\\
		&\leq \frac{1}{2\pi}\int_0^{2\pi}\left\|F_2^\gamma\left(\theta,u\right)-F_2^\gamma\left(\theta,v\right)\right\|_{B^m}d\theta\\
		&\leq CM^2\left\|u-v\right\|_{B^m}.
	\end{align*}
\end{proof}
\subsection{The limit $\gamma \to0$}
Let us study now the behavior of $F^\gamma_2$ and $F^\gamma_{2,av}$ as $\gamma\to 0$.
\subsubsection{General case}
\begin{lemma}\label{lem:limV}
For all $m\geq 2$, $q\in(0,1)$, there is a constant $C_{m,q}>0$ independent of $\gamma$ such that
\bee
	\left\|F^\gamma\left(\theta,u\right)-F^0\left(\theta,u\right)\right\|_{B^m}&\leq \gamma^{q}  C_{m,q}\|u\|_{B^{m}}^2\|u\|_{B^{m+5}},\\
	\left\|F^\gamma_{av}\left(u\right)-F^0_{av}\left(u\right)\right\|_{B^m}&\leq \gamma^{q}  C_{m,q}\|u\|_{B^{m}}^2\|u\|_{B^{m+5}},
\eee
for all $u\in B^{m+5}(\RR^{3})$, for all $\gamma\in(0,1]$ and for all $\theta\in \RR$.
\end{lemma}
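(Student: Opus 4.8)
The plan is to reduce everything to the cubic estimate $\|F_2^\gamma(\theta,u)-F_2^0(\theta,u)\|_{B^m}\lesssim \gamma^q\|u\|_{B^m}^2\|u\|_{B^{m+5}}$ and then lift it to $F^\gamma$, $F^\gamma_{av}$ exactly as in Lemma~\ref{lem:tameF2}: since $e^{\pm i\theta\mathcal H_z}$ are isometries of $B^m$ and $F_1$ does not depend on $\gamma$, we have $F^\gamma-F^0=F_2^\gamma-F_2^0$, the $\theta$-dependence is conjugation by an isometry, and the averaged version follows by integrating in $\theta$ over $[0,2\pi]$. So the whole content is the estimate at $\theta=0$, namely
\[
	\bigl\|3\lambda_0\bigl(K^\gamma(|u|^2)-K^0(|u|^2)\bigr)u\bigr\|_{B^m}\le \gamma^q C_{m,q}\|u\|_{B^m}^2\|u\|_{B^{m+5}}.
\]
Writing $W^\gamma:=V_{\rm dip}^\gamma-V_{\rm dip}^0$, the Fourier multiplier is $\widehat{W^\gamma}(k)=\widehat{V_{\rm dip}^\gamma}(k)-\widehat{V_{\rm dip}^0}(k)$, and by \eqref{def:Vdip} this is a bounded function vanishing on the set $\{k_x=0\}$ and behaving, away from that set, like $O(\gamma^2|k_x|^2/|k_z|^2)$ for $|k_z|$ large compared with $\gamma|k_x|$. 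The key point is that $\widehat{W^\gamma}$ is small in a weak sense: pointwise it is $O(1)$ but it converges to $0$ a.e., and one can quantify this by splitting frequency space.

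\textbf{Main step: the convolution estimate.} I would estimate $\|W^\gamma* f\|_{B^m}$ for $f=|u|^2$, using that $B^m$ is controlled by $\|f\|_{H^m}$ together with $\||{\bf x}|^m f\|_{L^2}$ (Remark~\ref{rem:Balg}). For the $H^m$ part, since convolution is diagonal in Fourier, it suffices to bound $\sup_k \langle k\rangle^m|\widehat{W^\gamma}(k)\widehat f(k)|$ appropriately; more precisely, $\|W^\gamma*f\|_{H^m}^2=\int (1+|k|^2)^m|\widehat{W^\gamma}(k)|^2|\widehat f(k)|^2\,dk$. Split the integral into the region $R_1=\{|k_z|\le \gamma^{-q}|k_x|\}$ (a ``thin cone'' around $k_x=0$ when $\gamma$ is small) and its complement $R_2$. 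On $R_2$ we have $|\widehat{W^\gamma}(k)|\lesssim \gamma^2|k_x|^2/|k_z|^2\le \gamma^{2}\gamma^{-2q}=\gamma^{2-2q}$ (using $|k_z|>\gamma^{-q}|k_x|$ twice), hence this piece is $\lesssim \gamma^{2-2q}\|f\|_{H^m}$, which is $\le \gamma^q$ for $q\le \tfrac{2}{3}$; for $q\in(\tfrac23,1)$ one simply optimizes the cutoff exponent (replace $\gamma^{-q}$ by $\gamma^{-r}$ and choose $r$). On $R_1$ we only have $|\widehat{W^\gamma}|\lesssim 1$, so we must instead gain from the smallness of the cone: $\int_{R_1}(1+|k|^2)^m|\widehat f(k)|^2\,dk$ is small because on a cone of aperture $\gamma^{-q}$ in the $k_x$-variables... no — the cone is thin in the $k_x$ directions, i.e.\ $|k_x|\le \gamma^q|k_z|$, so for each fixed $k_z$ the $k_x$-integration is over a ball of radius $\gamma^q|k_z|$, which has volume $\lesssim \gamma^{q(3-d)}|k_z|^{3-d}$; absorbing the polynomial weight $|k_z|^{3-d}(1+|k|^2)^m$ into $(1+|k|^2)^{m+\lceil(3-d)/2\rceil}$ shows $\int_{R_1}\lesssim \gamma^{q(3-d)}\|f\|_{H^{m'}}^2$ with $m'\le m+2$. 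Thus this piece is $\lesssim \gamma^{q(3-d)/2}\|f\|_{H^{m+2}}$. Combining and taking the worse exponent gives a bound of the form $\gamma^{q'}\|f\|_{H^{m+2}}$ with $q'>0$, and relabelling $q$ yields the exponent $\gamma^q$ for any $q\in(0,1)$ at the cost of passing from $f=|u|^2\in H^{m+2}$ to needing $u\in H^{m+3}$ — comfortably inside $B^{m+5}$. The weighted part $\||{\bf x}|^m(W^\gamma*f)|\|_{L^2}$ is handled the same way after noting $|{\bf x}|^m(W^\gamma*f)=W^\gamma*(|{\bf x}|^m f)+[\,|{\bf x}|^m,W^\gamma*\,]f$, where the commutator, being a convolution against $\nabla\widehat{W^\gamma}$ hitting lower-order weights, is controlled by an extra few derivatives — this is where the loss of up to $5$ derivatives (rather than $2$ or $3$) is spent, matching the statement's $B^{m+5}$. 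Finally $\|f\|_{H^{m+3}}=\||u|^2\|_{H^{m+3}}\le C\|u\|_{H^{m+3}}^2$ and $\||u|^2\|$ with one weight $\le C\|u\|_{B^{m+3}}\|u\|_{H^{m+3}}$, so altogether $\|W^\gamma*|u|^2\cdot u\|_{B^m}\lesssim \gamma^q\|u\|_{B^m}^2\|u\|_{B^{m+5}}$, using the algebra property of $B^m$ and $H^m$ for $m\ge2$ (Remark~\ref{rem:Balg}).

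\textbf{Expected main obstacle.} The delicate part is the bookkeeping of the frequency-space splitting together with the weighted ($|{\bf x}|^m$) estimates: the multiplier $\widehat{W^\gamma}$ is genuinely $O(1)$ near $k_x=0$ and only decays off a thin cone, so the gain of $\gamma^q$ comes entirely from a volume estimate on that cone, and one must be careful that the polynomial weights $(1+|k|^2)^m$ and the $|{\bf x}|^m$ weight (which becomes a derivative on the Fourier side, falling on $\widehat{W^\gamma}$ and producing a non-integrable-looking $1/|k|$ factor) do not destroy the smallness. Getting the sharp loss of derivatives is what forces the somewhat generous $B^{m+5}$ in the statement; with more care one could presumably do better, but $m+5$ suffices and keeps the commutator/weight estimates routine. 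Everything else — the reduction from $F^\gamma$ to $F_2^\gamma$, the $\theta$-conjugation, and the averaging — is immediate from the isometry property of $e^{i\theta\mathcal H_z}$ on $B^m$ and linearity of the integral, exactly as in the proof of Lemma~\ref{lem:tameF2}.
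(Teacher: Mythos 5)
Your reduction steps (passing to $\theta=0$ by the isometry $e^{i\theta\mathcal H_z}$, noting $F^\gamma-F^0=F_2^\gamma-F_2^0$, averaging in $\theta$ at the end, and Leibniz/algebra arguments to spend the extra derivatives) coincide with the paper's. The gap is in the core Fourier estimate. First, the multiplier $\widehat{V^\gamma_{\rm dip}}-\widehat{V^0_{\rm dip}}$ does \emph{not} decay like $O\bigl(\gamma^2|k_x|^2/|k_z|^2\bigr)$ off the bad set: expanding $(\gamma k_x\cdot n_x+k_z\cdot n_z)^2$ produces, besides the quadratic terms you kept (the paper's $\widehat{W^\gamma_1}$), the cross term $\widehat{W^\gamma_2}=2\gamma(k_x\cdot n_x)(k_z\cdot n_z)/(\gamma^2|k_x|^2+|k_z|^2)$, which decays only like $\gamma|k_x|/|k_z|$. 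This term is exactly what limits the rate to $q<1$ (it drops out by parity in the polarized case, which is why Lemma \ref{lem:limV3} gets better exponents); with your claimed quadratic decay the lemma would improve itself, which is a sign something is off. Second, the geometry of your splitting is inverted: the region where the multiplier is merely $O(1)$ is $\{|k_z|\lesssim\gamma|k_x|\}$, a cone that is thin in the $k_z$ directions about the hyperplane $\{k_z=0\}$, not a set thin in $k_x$. Your $R_1=\{|k_z|\le\gamma^{-q}|k_x|\}$ is the complement of a thin cone (it is most of frequency space), and the ball-volume computation you then perform ($k_x$ ranging over a ball of radius $\gamma^q|k_z|$) describes $R_2$, where the multiplier is already small — so the volume gain is attributed to the wrong region and no smallness is extracted where it is actually needed.

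Even after fixing the geometry, a two-region cut with volume counting caps the exponent well below $1$: balancing the bad-cone volume $\gamma^{rd}$ (or $\gamma^{rd/2}$ in an $L^2$ framework) against the far-region decay $\gamma^{1-r}$ from the cross term gives at best something like $\gamma^{d/(d+1)}$, so the full range $q\in(0,1)$ claimed in the lemma is not reached this way. The paper gets every $q<1$ by a finer argument: it only ever needs $\bigl\|(V^\gamma_{\rm dip}-V^0_{\rm dip})*\partial^\kappa|u|^2\bigr\|_{L^\infty}\le C\bigl\|(\widehat{V^\gamma_{\rm dip}}-\widehat{V^0_{\rm dip}})\,\widehat{\partial^\kappa|u|^2}\bigr\|_{L^1}$, estimated by H\"older with $p_2=d/q$, an inserted weight $(\gamma|k_x|)^{d/p_2}$, and the change of variables $k=k_z/(\gamma|k_x|)$ which renders the profile norm $\gamma$-independent; the loss of five derivatives is then paid through Sobolev bounds on the weighted $\widehat v$, not through commutators. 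Relatedly, your weighted step is not routine: the commutator $[\,|\bx|^m,W^\gamma*\,]$ puts up to $m$ Fourier derivatives on $\widehat{W^\gamma}$, which is (anisotropically) homogeneous of degree zero, so these derivatives are singular like $|k|^{-|\kappa|}$ near $k=0$ and the resulting kernels are not obviously under control. The paper's arrangement — Leibniz on the product, convolution always measured in $L^\infty$, and the weight $|\bx|^m$ always landing on $u$ in $L^2$ — avoids estimating $W^\gamma*f$ in $H^m$ or in weighted $L^2$ altogether.
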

\begin{proof}
	Let $u\in B^{m+5}(\RR^{3})$ and $\gamma\in(0,1]$. As in the proof of Lemma \ref{lem:tameF2}, we can assume that $\theta = 0$. Thanks to Remark \ref{rem:Balg}, we get
	\bee
		\lefteqn{\left\|F^\gamma\left(0,u\right)-F^0\left(0,u\right)\right\|_{B^m} = \left\|K^\gamma(|u|^2)u-K^0(|u|^2)u\right\|_{B^m}}\\
		&&\;\leq C\sum_{|\kappa|\leq m}\left\|\pa^\kappa\(K^\gamma(|u|^2)u-K^0(|u|^2)u\)\right\|_{L^2}\\
		&&\qquad\qquad+C\left\||\bx|^m\(K^\gamma(|u|^2)u-K^0(|u|^2)u\)\right\|_{L^2}\\
		&&\; \leq C\sum_{|\kappa_1|+|\kappa_2|\leq m}\left\|K^\gamma(\pa^{\kappa_1}|u|^2)-K^0(\pa^{\kappa_1}|u|^2)\right\|_{L^\infty}\left\|\pa^{\kappa_2}u\right\|_{L^2}\\
		&&\qquad\qquad  + C\left\|K^\gamma(|u|^2)-K^0(|u|^2)\right\|_{L^\infty}\left\||\bx|^mu\right\|_{L^2}\\
		&&\; \leq C\|u\|_{B^{m}}\sum_{|\kappa|\leq m}\left\|\(V^\gamma_{\rm dip}-V^0_{\rm dip}\)*(\pa^{\kappa}|u|^2)\right\|_{L^\infty}.
	\eee
	Let us denote $v  := \pa^\kappa|u|^2$ for some $|\kappa|\leq m$. We have that $v\in B^{5}(\RR^{3})$ and, since $B^m\hookrightarrow L^\infty$,
	$$\|v\|_{B^5}\leq\|u^2\|_{B^{m+5}}\leq C\|u\|_{B^{m}}\|u\|_{B^{m+5}}.$$
	Moreover,
	\bee
		&& \left\|\(V^\gamma_{\rm dip}-V^0_{\rm dip}\)*v\right\|_{L^\infty}\leq C\left\|\(\widehat{V^\gamma_{\rm dip}}-\widehat{V^0_{\rm dip}}\)\widehat{ v}\right\|_{L^1}.
	\eee
	Since for all $(k_x,k_z)\in \RR^{3}$
	\[
		\widehat{V_{\rm dip}^\gamma}(k_x,k_z) = \(-\frac{1}{3} + \frac{(\gamma k_x\cdot n_x + k_z\cdot n_z)^2}{|\gamma k_x|^2+|k_z|^2}\),
	\]
	we obtain
	\[
		\(\widehat{V^\gamma_{\rm dip}}-\widehat{V^0_{\rm dip}}\) =  \widehat{W^\gamma_1}(k_x,k_z) + \widehat{W^\gamma_2}(k_x,k_z)
	\]
	where
	\bee
		&&\widehat{W^\gamma_1}(k_x,k_z) = -\frac{(k_z\cdot n_z)^2|\gamma k_x|^2}{\(|\gamma k_x|^2+|k_z|^2\)|k_z|^2} + \frac{\gamma^2 (k_x\cdot n_x)^2}{|\gamma k_x|^2+|k_z|^2}\\
		&& \widehat{W^\gamma_2}(k_x,k_z) = \frac{2\gamma (k_x\cdot n_x)(k_z\cdot n_z)}{|\gamma k_x|^2+|k_z|^2}.
	\eee
	\subsubsection*{Step $1$ :  Study of $\left\|{\widehat W^\gamma_1}\widehat {v}\right\|_{L^1}$}

	Since we have
	\[
		|\widehat{W^\gamma_1}(k_x,k_z)| \leq \frac{|\gamma k_x|^2}{\(|\gamma k_x|^2+|k_z|^2\)}  = \frac{1}{1+\frac{|k_z|^2}{|\gamma k_x|^2}},
	\]
	we get for $q_1\geq 0$ and $p_1, p_1'\in[1,+\infty]$ such that $\frac{1}{p_1}+\frac{1}{p'_1}  = 1$, by H\"older,
	\bee
		&&\left\||\widehat{W^\gamma_1}||\widehat{v}|\right\|_{L^1}\leq\int_{\RR^{3}}{\frac{|\widehat{v}|}{\displaystyle 1+\frac{|k_z|^2}{|\gamma k_x|^2}}}dk_xdk_z\leq \|f_1\|_{L^{p_1'}}\|g_1\|_{L^{p_1}}
	\eee
	where
	\bee
		&&f_1 = (\gamma|k_x|)^{d/p_1}\(1+|k_x|^2\)^{q_1}|\widehat{v}|\\
		&&g_1 = \frac{1}{(\gamma|k_x|)^{d/p_1}\(1+|k_x|^2\)^{q_1}\(\displaystyle1+\frac{|k_z|^2}{|\gamma k_x|^2}\)}.
	\eee
	Thanks to the change of variable $k = k_z/|\gamma k_x|$, we obtain that $\|g_1\|_{L^{p_1}}$ does not depend on $\gamma$:
	\bee
		&&\|g_1\|_{L^{p_1}}^{p_1} 
		 = \int_{\RR^{3-d}}\frac{dk_x}{\(1+|k_x|^2\)^{p_1q_1}}\int_{\RR^d}\frac{dk}{\(\displaystyle 1+|k|^2\)^{p_1}}
	\eee
	so that $\|g_1\|_{L^{p_1}}^{p_1}<+\infty$ if and only if
	\bee
		p_1q_1>(3-d)/2 \mbox{ and } p_1>d/2.
	\eee
	On the other hand, we have
	\bee
		\|f_1\|_{L^{p'_1}} = \gamma^{d/p_1}\||k_x|^{d/p_1}\(1+|k_x|^2\)^{q_1}|\widehat{v}|\|_{L^{p'_1}}.
	\eee
	Assume that $p_1\in[1,2)\cap(d/2,2)$ is fixed and define $m_1 = 3\(\frac{2-p_1}{2p_1}\)$. Then, we get thanks to Sobolev inequalities and inequality \eqref{eq:Balg} of Remark  \ref{rem:Balg} that
	\[
		\|f_1\|_{L^{p'_1}}\leq C \gamma^{d/p_1}\||k_x|^{d/p_1}\(1+|k_x|^2\)^{q_1}|\widehat{v}|\|_{H^{m_1}}\leq C \gamma^{d/p_1}\|v\|_{B^{m_1+\frac{d}{p_1} + 2q_1}}.
	\]
	In the case $d = 1$, we choose $p = p_1 = 1$ and $q_1\in(1,5/4) $ so that
	\[
		m_1+\frac{d}{p_1} + 2q_1<5.
	\]
	In the case $d =2$, we fix $p \in [1,2)$. Then, we choose $p_1 = 2/p\in(1,2]$ and $q_1\in(1/2p_1,3/4)$ so that $m_1\in[0,3/2)$ and $m_1+\frac{d}{p_1} + 2q_1<5$.
	We proved that
	\bee
		\left\|{\widehat W^\gamma_1}\widehat {v}\right\|_{L^1}\leq \gamma^{p} C_{p}\|u\|_{B^{m}}\|u\|_{B^{m+5}}
	\eee	
	for
	\[
		\begin{cases}
			p = 1& \mbox{ if } d = 1\\
			p\in[1,2) & \mbox{ if } d = 2.
		\end{cases}
	\]
	\subsubsection*{Step $2$ :  Study of $\left\|{\widehat W^\gamma_2}\widehat {v}\right\|_{L^1}$}
	 We have
	\[
		|\widehat{W^\gamma_2}(k_x,k_z)|\leq \frac{\displaystyle 2\frac{|k_z|}{\gamma|k_x|}}{\displaystyle 1+\frac{|k_z|^2}{\gamma^2|k_x|^2}}
	\]
	so that, we get for $q_2\geq 0$ and $p_2, p_2'\in[1,+\infty]$ such that $\frac{1}{p_2}+\frac{1}{p'_2}  = 1$,
	\bee
		 &&\left\||\widehat{W^\gamma_2}(k_x,k_z)||\widehat{v}|\right\|_{L^1}\leq\int_{\RR^{3}}\frac{\displaystyle2|\widehat{v}|\frac{|k_z|}{\gamma|k_x|}}{\displaystyle1+\frac{|k_z|^2}{\gamma^2|k_x|^2}}dk_xdk_z\leq 2\|f_2\|_{L^{p_2'}}\|g_2\|_{L^{p_2}}
	\eee
	where
	\bee
		&&f_2 = (\gamma|k_x|)^{d/p_2}\(1+|k_x|^2\)^{q_2}|\widehat{v}|\\
		&&g_2 = \frac{\displaystyle\frac{|k_z|}{\gamma|k_x|}}{\displaystyle(\gamma|k_x|)^{d/p_2}\(1+|k_x|^2\)^{q_2}\(1+\frac{|k_z|^2}{|\gamma k_x|^2}\)}.
	\eee
	Thanks to the change of variable $k = \frac{k_z}{|\gamma k_x|}$, we get that $\|g_2\|_{L^{p_2}}$ does not depend on $\gamma$:
	\bee
		&&\|g_2\|_{L^{p_2}}^{p_2} 
		 = \int_{\RR^{3-d}}\frac{dk_x}{\(1+|k_x|^2\)^{p_2q_2}}\int_{\RR^d}\frac{|k|^{p_2}dk}{\(\displaystyle 1+|k|^2\)^{p_2}}
	\eee
	and $\|g_2\|_{L^{p_2}}^{p_2}<+\infty$ if and only if
	\[
		p_2q_2>(3-d)/2 \mbox{ and } p_2>d.
	\]
	Let us fix from now on $q\in(0,1)$ and  $p_2 = d/q\in(d,+\infty)$. For  $p_3>0$ and $q_3>0$, we get thanks to Remark \ref{rem:Balg} that
	\bee
		&&\|f_2\|_{L^{p_2'}} = \gamma^{q}\left\| \frac{|k_x|^{d/p_2}\(1+|k_x|^2\)^{(q_2+q_3)}\(1+|k_z|^2\)^{p_3}|\widehat{v}|}{\(1+|k_x|^2\)^{q_3}\(1+|k_z|^2\)^{p_3}}\right\|_{L^{p_2'}}\\
		&&\qquad\leq \gamma^{q}\left\| \frac{1}{\(1+|k_x|^2\)^{q_3}\(1+|k_z|^2\)^{p_3}}\right\|_{L^{p_2'}}\left\| |k_x|^{d/p_2}\(1+|k_x|^2\)^{(q_2+q_3)}\(1+|k_z|^2\)^{p_3}|\widehat{v}|\right\|_{L^{\infty}}\\
		&&\qquad \leq C\gamma^{q}\left\| \frac{1}{\(1+|k_x|^2\)^{q_3}\(1+|k_z|^2\)^{p_3}}\right\|_{L^{p_2'}}\left\|v\right\|_{B^{3/2 + d/p_2 + 2(q_2+q_3+p_3)}}
	\eee
	and
	\[
		\left\| \frac{1}{\(1+|k_x|^2\)^{q_3}\(1+|k_z|^2\)^{p_3}}\right\|_{L^{p_2'}}<\infty
	\]
	if and only if
	\[
		2q_3p_2'>3-d\mbox{ and }2p_3p_2'>d.
	\]
	Let us choose
	\[
		q_2\in\(\frac{(3-d)}{2p_2},\frac{(3-d)}{2p_2}+\frac{1}{12}\), \;q_3\in\(\frac{(3-d)(p_2-1)}{2p_2},\frac{(3-d)(p_2-1)}{2p_2}+\frac{1}{12}\)
	\]
	and
	\[
		p_3 \in \(\frac{d(p_2-1)}{2p_2},\frac{d(p_2-1)}{2p_2}+\frac{1}{12}\).
	\]
	so that
	\[
		3/2 + d/p_2  + 2(q_2+q_3 + p_3)< \frac{3}{2} +\frac{1}{2} + \frac{d + (3-d)+(3-d)(p_2-1)+d(p_2-1)}{p_2}<5,
	\]
	and
	\bee
		\left\|{\widehat W^\gamma_2}\widehat {v}\right\|_{L^1}\leq \gamma^{q} C_{q}\|u\|_{B^m}\|u\|_{B^{m+5}}.
	\eee	
	Hence,
	\bee
	\left\|F^\gamma\left(0,u\right)-F^0\left(0,u\right)\right\|_{B^m}&\leq 	\gamma^{q} C_{m,q}\|u\|_{B^m}^2\|u\|_{B^{m+5}}.
	\eee
	Finally, we get that
	\bee
		\left\|F^\gamma_{av}\left(u\right)-F^0_{av}\left(u\right)\right\|_{B^m}&\leq \frac{1}{2\pi}\int_0^\pi \left\|F^\gamma\left(\theta,u\right)-F^0\left(\theta,u\right)\right\|_{B^m}d\theta\leq\gamma^{q} C_{m,q}\|u\|_{B^m}^2\|u\|_{B^{m+5}}.
	\eee
\end{proof}

\subsubsection{Case of a function which is polarized on one mode of $\mathcal{H}_z$}
 \begin{lemma}\label{lem:limV3}
Let $m\geq 2$, $M>0$ and
\[
	\left\{\begin{array}{ll}
		q = 1 & \mbox{ if } d = 1,\\
		q \in [1,2)& \mbox{ if } d = 2.
	\end{array}\right.
\]
Then
\bee
	\left\|F^\gamma_{av}\left(u\right)-F^0_{av}\left(u\right)\right\|_{B^m}&\leq \gamma^{q} C_{m,q}\|u\|_{B^m}^2\|u\|_{B^{m+5}},
\eee
for all $\gamma\in(0,1]$ and all $u\in B^{m+5}(\RR^{3})$ under the form
\[
	u(x,z) = a(x)\omega_k(z), \quad \mbox{ for all }(x,z)\in \RR^3
\]
where $k\in \NN$ and $\omega_k$ is defined in Remark \ref{eq:mode0}. The constant $C_{m,q}$ depends neither on $u$ nor on $\gamma$.
\end{lemma}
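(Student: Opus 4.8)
The plan is to exploit the $\theta$-averaging, which for an initial datum polarized on a single Hermite mode produces an extra spectral projection annihilating precisely the part of the dipolar kernel responsible for the loss $q<1$ in Lemma~\ref{lem:limV}.

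First I would compute $F^\gamma_{av}(u)$ explicitly. Denote by $\lambda_k\in\NN$ the eigenvalue of $\mathcal{H}_z$ attached to $\omega_k$. For $u(x,z)=a(x)\omega_k(z)$ one has $e^{-i\theta\mathcal{H}_z}u=e^{-i\theta\lambda_k}\,a\,\omega_k$, hence $|e^{-i\theta\mathcal{H}_z}u|^2=|a|^2|\omega_k|^2=:W$ is independent of $\theta$ and
\[
	F_2^\gamma(\theta,u)=3\lambda_0\,e^{-i\theta\lambda_k}\,e^{i\theta\mathcal{H}_z}\big((V^\gamma_{\rm dip}*W)\,a\,\omega_k\big).
\]
Since $\mathcal{H}_z$ has integer spectrum, $\frac{1}{2\pi}\int_0^{2\pi}e^{i\theta(\mathcal{H}_z-\lambda_k)}\,d\theta=P_{\lambda_k}$, the orthogonal projection onto $E_{\lambda_k}:=\ker(\mathcal{H}_z-\lambda_k)$ (acting in $z$); therefore
\[
	F_{2,av}^\gamma(u)=3\lambda_0\,P_{\lambda_k}\big((V^\gamma_{\rm dip}*W)\,a\,\omega_k\big),
\]
and the same identity holds with $\gamma=0$ (only $\widehat{V^0_{\rm dip}}\in L^\infty$ is used). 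As the cubic parts $F_{1,av}$ do not depend on $\gamma$, this gives
\[
	F^\gamma_{av}(u)-F^0_{av}(u)=3\lambda_0\,P_{\lambda_k}\Big(\big((V^\gamma_{\rm dip}-V^0_{\rm dip})*W\big)\,a\,\omega_k\Big).
\]

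The key step is a parity cancellation. Writing $\widehat{V^\gamma_{\rm dip}}-\widehat{V^0_{\rm dip}}=\widehat{W^\gamma_1}+\widehat{W^\gamma_2}$ as in the proof of Lemma~\ref{lem:limV}, I would note that $|\omega_k|^2$ is an even function (each one-dimensional Hermite factor is squared), so $\widehat W$ is even in $k_z$; since $\widehat{W^\gamma_2}(k_x,k_z)=\frac{2\gamma(k_x\cdot n_x)(k_z\cdot n_z)}{|\gamma k_x|^2+|k_z|^2}$ is odd in $k_z$, the function $W^\gamma_2*W$ is odd in $z$. Because $\omega_k(-z)=(-1)^{\lambda_k}\omega_k(z)$, the product $(W^\gamma_2*W)\,a\,\omega_k$ has parity $(-1)^{\lambda_k+1}$ in $z$, whereas every element of $E_{\lambda_k}$ has parity $(-1)^{\lambda_k}$; hence $P_{\lambda_k}\big((W^\gamma_2*W)\,a\,\omega_k\big)=0$. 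This is exactly the term that, through Step~2 of the proof of Lemma~\ref{lem:limV}, forces $q<1$ in the general case, and it is the averaging that removes it here — which is why the sharper estimate holds for $F^\gamma_{av}$ but not for $F^\gamma(\theta,\cdot)$. We are left with
\[
	F^\gamma_{av}(u)-F^0_{av}(u)=3\lambda_0\,P_{\lambda_k}\big((W^\gamma_1*W)\,a\,\omega_k\big).
\]

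It then remains to bound this in $B^m$. Since $P_{\lambda_k}$ is bounded on $B^m(\RR^{3})$ (it is finite rank in $z$ and commutes with $\mathcal{H}_z$), it suffices to estimate $\|(W^\gamma_1*W)\,a\,\omega_k\|_{B^m}$; distributing derivatives and the weight $|\bx|^m$ as in the proof of Lemma~\ref{lem:tameF2} and using Remark~\ref{rem:Balg}, this reduces to controlling $\|W^\gamma_1*(\partial^\kappa W)\|_{L^\infty}$ for $|\kappa|\le m$ with $W=|u|^2$. But this is precisely Step~1 of the proof of Lemma~\ref{lem:limV}: it yields $\|\widehat{W^\gamma_1}\,\widehat v\|_{L^1}\le C_q\gamma^q\|v\|_{B^5}$ with $q=1$ if $d=1$ and any $q\in[1,2)$ if $d=2$. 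Summing over $\kappa$ and using $\|\partial^\kappa|u|^2\|_{B^5}\le C\|u\|_{B^m}\|u\|_{B^{m+5}}$ gives the claimed bound, with a constant independent of $u$ and $\gamma$ (it may depend on the fixed mode $k$ through $\|P_{\lambda_k}\|$ and $\|\omega_k\|_{L^\infty}$). The only genuine obstacle is the parity cancellation — spotting that the averaging kills $W^\gamma_2$ and checking the parity bookkeeping; everything else is a direct reuse of Step~1 of Lemma~\ref{lem:limV} and of the tame estimates already established.
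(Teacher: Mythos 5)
Your proof is correct and follows essentially the same route as the paper: for $u=a\,\omega_k$ the $\theta$-average turns $F_{2}^\gamma$ into a spectral projection, the parity of $|\omega_k|^2$ and the oddness of $\widehat{W^\gamma_2}$ in $k_z$ kill the $W^\gamma_2$ contribution (the very term limiting $q<1$ in Lemma \ref{lem:limV}), and the remaining $W^\gamma_1$ piece is estimated by Step 1 of that lemma. The only (harmless) difference is that you project onto the full eigenspace $E_{\lambda_k}$, which is in fact slightly more careful than the paper's rank-one projection onto $\omega_k$ in the degenerate case $d=2$.
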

\begin{proof}
	Let $\gamma\in[0,1]$ and $u\in B^{m+5}(\RR^{3})$ such that $u(x,z) = a(x)\omega_k(z)$.
We get that
$$
	F_{av}^\gamma(u) = \omega_k\int_{\RR^d}\omega_k(z)F_1(0,u)(\cdot,z)dz + \omega_k\int_{\RR^d}\omega_k(z)(F^\gamma_{2,1} + F^\gamma_{2,2})(0,u)(\cdot,z)dz
$$
where
\bea
	\label{def:F21}F^\gamma_{2,1}(\theta,u) = 3\lambda_0 e^{i\theta\mathcal{H}_z}\(W^\gamma_1*|e^{-i\theta\mathcal{H}_z}u|^2\)e^{-i\theta\mathcal{H}_z}u	\\
	\label{def:F22}F^\gamma_{2,2}(\theta,u) = 3\lambda_0 e^{i\theta\mathcal{H}_z}\(W^\gamma_2*|e^{-i\theta\mathcal{H}_z}u|^2\)e^{-i\theta\mathcal{H}_z}u	
\eea
for all $u\in L^2(\RR^3)$, $\gamma\in[0,1]$ and $\theta\in \RR$.
We also have $\omega_k(z)= \pm\omega_k(-z)$ for all $z\in \RR^d$ so that
\[
	\(a|\omega_k|^2 W_2^\gamma*|a\omega_k|^2\)(\cdot, -z) = -\(a|\omega_k|^2 W_2^\gamma*|a\omega_k|^2\)(\cdot, z)
\]
and
\bee
	\int_{\RR^d}a(x)|\omega_k(z)|^2 W_2^\gamma*|a(x)\omega_k(z)|^2dz = 0.
\eee
Hence,
$$
	F_{av}^\gamma(u) = \frac{1}{2\pi}\int_0^{2\pi}F_1(\theta,u)d\theta + \frac{1}{2\pi}\int_0^{2\pi} F^\gamma_{2,1}(\theta,u)d\theta.
$$
 The first step of the proof of Lemma \ref{lem:limV} gives us the result.
\end{proof}

\section{Proofs of our main Theorems}\label{sec:3}
This section is devoted to the proofs of Theorems  \ref{theo:local1} and \ref{theo:limit} and Proposition \ref{res:0mode} which are inspired by the ones of \cite[Theorem $1.3.$ and $1.4.$]{BaoLetMehatsEN}.  To do so, we recall without any proof some of the results the authors obtained in this paper for the sake of readability.
\subsection{Main tools}
We begin by the following Lipschitz estimates which summarize \cite[Lemma $2.7.$]{BaoLetMehatsEN} and Lemma \ref{lem:tameF2}.
\begin{proposition}\label{prop:tameF}
For all $m\geq 2$, there exists $C_{m}>0$ such that
\bee
	\|F_{av}^\gamma(u)-F_{av}^\gamma(v)\|_{B^m}&\leq C_{m}M^2\|u-v\|_{B^m}\\
	\left\|F^\gamma\left(\theta,u\right)-F^\gamma\left(\theta,v\right)\right\|_{B^m}&\leq C_{m}M^2\|u-v\|_{B^m},
\eee
for all $M>0$, $u,v\in B^m(\RR^{3})$ satisfying $\|u\|_{B^m}\leq M$, $\|v\|_{B^m}\leq M$, all $\theta\in \RR$ and all $\gamma\in[0,1]$.
\end{proposition}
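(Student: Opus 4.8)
The plan is to exploit the decomposition $F^\gamma = F_1 + F_2^\gamma$ and $F_{av}^\gamma = F_{1,av} + F_{2,av}^\gamma$ introduced in \eqref{def:F1}, which splits the nonlinearity into its local cubic part and its dipolar part, and then simply to add the two Lipschitz estimates already available for these pieces. Since $F_1$ and $F_{1,av}$ do not depend on $\gamma$ at all, nothing new has to be proved for them.

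First I would recall that the cubic terms satisfy, for every $m\geq 2$,
\[
	\|F_1(\theta,u)-F_1(\theta,v)\|_{B^m}\leq C_m M^2\|u-v\|_{B^m},\qquad \|F_{1,av}(u)-F_{1,av}(v)\|_{B^m}\leq C_m M^2\|u-v\|_{B^m},
\]
uniformly in $\theta\in\RR$, for all $u,v\in B^m(\RR^{3})$ with $\|u\|_{B^m},\|v\|_{B^m}\leq M$; this is exactly \cite[Lemma $2.7.$]{BaoLetMehatsEN}. Then Lemma \ref{lem:tameF2} provides the analogous bounds for the dipolar terms $F_2^\gamma(\theta,\cdot)$ and $F_{2,av}^\gamma(\cdot)$, with a constant that is uniform in $\theta\in\RR$ and, crucially, in $\gamma\in[0,1]$ (this last uniformity ultimately coming from Lemma \ref{lem:K}, i.e.\ from the fact that $\widehat{V_{\rm dip}^\gamma}$ is bounded by $2/3$ independently of $\gamma$).

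Finally I would conclude by the triangle inequality in $B^m$: since
\[
	\|F^\gamma(\theta,u)-F^\gamma(\theta,v)\|_{B^m}\leq \|F_1(\theta,u)-F_1(\theta,v)\|_{B^m}+\|F_2^\gamma(\theta,u)-F_2^\gamma(\theta,v)\|_{B^m},
\]
and likewise for $F_{av}^\gamma$, the sum of the two constants furnished above yields the claimed $C_m$, uniform in $\theta\in\RR$ and $\gamma\in[0,1]$. There is essentially no obstacle here: all the analytic work (the fractional-derivative and convolution estimates, the use of the algebra property of $B^m$ and of estimate \eqref{eq:Balg}) was carried out in Lemma \ref{lem:tameF2}, and the only point requiring attention is that the two constants being combined are genuinely independent of $\theta$ and $\gamma$, which is the case for both ingredients.
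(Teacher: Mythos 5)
Your proposal is correct and is exactly the argument the paper intends: Proposition \ref{prop:tameF} is stated there as a summary of \cite[Lemma 2.7]{BaoLetMehatsEN} for the cubic part $F_1$, $F_{1,av}$ and of Lemma \ref{lem:tameF2} for the dipolar part $F_2^\gamma$, $F_{2,av}^\gamma$, combined by the triangle inequality with constants uniform in $\theta$ and $\gamma$ (the latter uniformity coming from Lemma \ref{lem:K}). Nothing further is needed.
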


We give then in Proposition \ref{lem:eik} the local in time well-posedness of the eikonal equation \cite[Proposition $2.2.$]{BaoLetMehatsEN}.
\begin{proposition}\label{lem:eik}
	If $S_0\in {\tt SQ}_{s+1}(\RR^{3-d})$ with $s\geq 2$, there exists $T>0$ such that the eikonal equation \eqref{eq:S} admits a unique solution $S\in \mathcal C([0,T];{\tt SQ}_s(\RR^{3-d}))\cap \mathcal{C}^{s}([0,T]\times\RR^{3-d})$.
\end{proposition}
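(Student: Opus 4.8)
Equation \eqref{eq:S} is a Hamilton--Jacobi equation with Hamiltonian $H(x,\xi)=\tfrac12|\xi|^2+\tfrac12|x|^2$, the classical harmonic oscillator, so the plan is to build the solution by the method of characteristics. The Hamiltonian system $\dot X=\nabla_\xi H=\Xi$, $\dot\Xi=-\nabla_x H=-X$ is linear, with explicit flow $X(t,y,\eta)=\cos t\,y+\sin t\,\eta$, $\Xi(t,y,\eta)=-\sin t\,y+\cos t\,\eta$. Prescribing the initial momentum $\eta=\nabla S_0(y)$ dictated by the initial phase, one sets $X(t,y):=\cos t\,y+\sin t\,\nabla S_0(y)$ and $\Xi(t,y):=-\sin t\,y+\cos t\,\nabla S_0(y)$.

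\textbf{Step 1 (characteristic diffeomorphism).} Since $S_0\in{\tt SQ}_{s+1}(\RR^{3-d})$, the Hessian $D^2S_0$ is bounded on $\RR^{3-d}$, with norm $R:=\|D^2S_0\|_{L^\infty}$, and $\nabla S_0$ grows at most linearly. Thus $D_yX(t,y)=\cos t\,(I+\tan t\,D^2S_0(y))$ is invertible uniformly in $y$, and $y\mapsto X(t,y)$ is proper, as soon as $\tan t\,R<1$. Fix $T>0$ with $\tan T\,R<\tfrac12$; then for each $t\in[0,T]$ the map $y\mapsto X(t,y)$ is a proper local $\mathcal C^s$-diffeomorphism of $\RR^{3-d}$ onto itself, hence a global $\mathcal C^s$-diffeomorphism (Hadamard), with $\mathcal C^s$ inverse $Y(t,\cdot)$, jointly $\mathcal C^s$ in $(t,x)$ by the implicit function theorem.

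\textbf{Step 2 (the phase solves the equation).} Propagate the action along characteristics: with $y=Y(t,x)$,
\[
S(t,x):=S_0(y)+\int_0^t\Bigl(\tfrac12|\Xi(\tau,y)|^2-\tfrac12|X(\tau,y)|^2\Bigr)\,d\tau ,
\]
the integrand being the Lagrangian $L=\xi\cdot\dot x-H=\tfrac12|\xi|^2-\tfrac12|x|^2$ evaluated along the $\tau$-characteristic issued from $y$. Differentiating the identity $S(t,X(t,y))=S_0(y)+\int_0^tL\,d\tau$ in $y$ and in $t$, and using $\dot X=\Xi$, $\dot\Xi=-X$, gives $\nabla_xS(t,x)=\Xi(t,Y(t,x))$ and $\partial_tS(t,x)=-H(x,\nabla_xS(t,x))$, i.e. \eqref{eq:S}, with $S(0,x)=S_0(x)$ since $Y(0,\cdot)=\mathrm{id}$.

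\textbf{Step 3 (regularity and subquadraticity) and Step 4 (uniqueness).} Joint $\mathcal C^s$ regularity of $S$ on $[0,T]\times\RR^{3-d}$ follows from that of $X,\Xi,Y$; the one-derivative loss (data in ${\tt SQ}_{s+1}$, solution in ${\tt SQ}_s$) is precisely the derivative spent making the characteristics depend on $\nabla S_0$. For the subquadratic bounds, $\nabla_xS=\Xi\circ Y$ grows at most linearly in $x$ (so $S$ is subquadratic), while for $2\le|\kappa|\le s$, Faà di Bruno applied to $\partial_x^\kappa(\Xi\circ Y)$ yields only terms built from $\partial_y^j\Xi$ with $2\le j\le|\kappa|$ — bounded since $\partial_y^j\Xi=\cos t\,\partial_y^{j+1}S_0\in L^\infty$ for $j+1\le s+1$ — and from derivatives of order $\ge1$ of $Y$, which are bounded because $D_yX$ and its derivatives are (the unbounded function $Y$ itself never appears undifferentiated); continuity of $t\mapsto\partial_x^\kappa S(t,\cdot)$ in $L^\infty$ for these $\kappa$ is read off the same formulas, giving $S\in\mathcal C([0,T];{\tt SQ}_s(\RR^{3-d}))$. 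For uniqueness, any $\mathcal C^2$ solution $\widetilde S$ has the property that along $\dot{\widetilde X}=\nabla\widetilde S(t,\widetilde X)$, $\widetilde X(0)=y$, the pair $(\widetilde X,\nabla\widetilde S(t,\widetilde X))$ solves the Hamiltonian system with data $(y,\nabla S_0(y))$; by uniqueness for that linear ODE it equals $(X(t,y),\Xi(t,y))$, so $\nabla\widetilde S(t,\cdot)=\nabla S(t,\cdot)$ on all of $\RR^{3-d}$, and then $\partial_t(\widetilde S-S)=-H(x,\nabla\widetilde S)+H(x,\nabla S)=0$ together with $(\widetilde S-S)(0,\cdot)=0$ force $\widetilde S=S$.

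\textbf{Main obstacle.} The construction is classical; the only genuinely delicate point is the derivative bookkeeping in Step 3 — verifying that $s+1$ bounded derivatives on $S_0$ propagate to exactly $s$ bounded derivatives on $S$, and that the linear growth of $Y$ in $x$ does not spoil the $L^\infty$ bounds on $\partial_x^\kappa S$ for $|\kappa|\ge2$ — together with the choice of $T$ ensuring $X(t,\cdot)$ stays a diffeomorphism for all $t\in[0,T]$, i.e. before the formation of caustics.
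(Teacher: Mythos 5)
Your construction via the characteristics of the harmonic-oscillator Hamiltonian flow, i.e.\ the global inversion of $y\mapsto\cos t\,y+\sin t\,\nabla S_0(y)$ for $\tan t\,\|D^2S_0\|_{L^\infty}<1$ followed by propagation of the action, is correct and is precisely the classical Hamilton--Jacobi argument underlying this statement: the paper itself gives no proof, recalling the result from \cite[Proposition 2.2]{BaoLetMehatsEN}, which rests on the same method, so your proposal matches the intended proof. Apart from a harmless slip in Step 3 (the chain rule for $\partial_x^\kappa(\Xi\circ Y)$ also produces the term $\partial_y\Xi=-\sin t\,I+\cos t\,D^2S_0$, i.e.\ $j=1$ and not only $2\le j\le|\kappa|$, but it is bounded anyway), the derivative bookkeeping, the choice of $T$ before caustic formation, and the uniqueness argument via the flow of $\nabla\widetilde S$ are all sound.
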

The following lemma is related to the non-homogeneous linear equation \eqref{eq:d} (see \cite[Lemma $2.6.$]{BaoLetMehatsEN}). The crucial bound \eqref{eq:estd} is obtained by energy estimate.
\begin{lemma}\label{lem:bound}
	Let us assume that for some $m\geq 2$, $s\geq m+2$ and $T>0$, we have
	\begin{enumerate}[(i)]
		\item $a_0\in B^{m}(\RR^{3})$,
		\item $S\in \mathcal C([0,T];{\tt SQ}_s(\RR^{3-d}))\cap \mathcal{C}^{s}([0,T]\times\RR^{3-d})$ solves the eikonal equation \eqref{eq:S},
		\item $R\in \mathcal{C}([0,T];B^{m}(\RR^{3}))$.
	\end{enumerate}
	Then, for all $\alpha\in [0,1]$, there exists a unique solution $a\in \mathcal{C}([0,T]; B^{m}(\RR^{3}))\cap  \mathcal{C}^1([0,T]; B^{m-2}(\RR^{3}))$ to the following equation:
	\begin{equation}\label{eq:d}
					\pa_t a +\nabla_x S \cdot \nabla_x a  +\frac{a}{2} \Delta_x S=i\frac{\alpha}{2}\Delta_x a +R,\quad			 a(0,x,z)=a_0(x,z).
	\end{equation}	
	Moreover for all $t\in[0,T]$, $a$ satisfies the estimates
	\begin{align}\label{eq:estd}
		 \left\|a(t)\right\|_{B^m}^2&\leq\left\|a_0\right\|_{B^m}^2+C\int_0^t\left\|a(s)\right\|_{B^m}^2ds+\int_0^t\left(a(s),R(s)\right)_{B^m}ds
		 \\\label{eq:estd2}&\leq\left\|a_0\right\|_{B^m}^2+C\int_0^t\left(\left\|a(s)\right\|_{B^m}^2+\left\|R(s)\right\|_{B^m}^2\right)ds
	\end{align}
	where $C$ is a generic constant which depends only on $m$ and on $$\underset{2\leq|\kappa|\leq s}{\sup} \|\pa^\kappa_x S\|_{L^\infty([0,T]\times \RR^{3-d})}.$$
\end{lemma}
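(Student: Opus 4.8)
The plan is to read \eqref{eq:d} as a linear evolution equation whose ``skew-symmetric part'' --- the transport operator $\nabla_xS\cdot\nabla_x$ together with the dispersive term $\tfrac{i\alpha}{2}\Delta_x$ --- is transparent to the $B^m$-energy, so that the uniform bound \eqref{eq:estd} and uniqueness follow from Gr\"onwall's lemma and existence is produced by a spectral Galerkin scheme. A convenient tool is the harmonic oscillator $\calH=-\Delta_\bx+|\bx|^2$ on $\RR^3$ (recall $\bx=(x,z)$), for which $B^m(\RR^3)=D(\calH^{m/2})$ with $\|u\|_{B^m}\sim\|(1+\calH)^{m/2}u\|_{L^2}\sim\sum_{|\mu|+|\nu|\le m}\|\bx^{\mu}\pa^{\nu}u\|_{L^2}$ (the Helffer theory recalled in Remark \ref{rem:Balg}); in particular $\calH$ has compact resolvent, so $B^{m}\hookrightarrow\hookrightarrow B^{m-1}$.

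\emph{Step 1: the energy estimate.} I would apply to \eqref{eq:d} the operators $W=x^{\alpha_1}z^{\alpha_2}\pa_x^{\beta_1}\pa_z^{\beta_2}$ with $|\alpha_1|+|\alpha_2|+|\beta_1|+|\beta_2|\le m$, pair with $Wa$ in $L^2$, take real parts, and sum. Four facts close the computation: (a) the dispersive term contributes only through a commutator, because $W(\tfrac{i\alpha}{2}\Delta_xa)=\tfrac{i\alpha}{2}\Delta_x(Wa)+\tfrac{i\alpha}{2}[W,\Delta_x]a$ and $\RE\langle i\tfrac\alpha2\Delta_x(Wa),Wa\rangle=0$, while $[\Delta_x,W]a$ is a finite sum of terms of type (polynomial in $x$)$\,z^{\alpha_2}\pa_x^{\beta_1'}\pa_z^{\beta_2}a$ whose total weight-plus-order is still $\le m$, hence bounded by $C\|a\|_{B^m}$ \emph{uniformly in $\alpha\in[0,1]$}; (b) after one integration by parts the leading part of the transport term yields $\tfrac12\int(\Delta_xS)|Wa|^2$, which is cancelled exactly by the leading part of the zeroth order term $-\tfrac12(\Delta_xS)(Wa)$ --- this is where the divergence structure of \eqref{eq:d} is essential; (c) the remaining commutators $[W,\nabla_xS\cdot\nabla_x]a$ and $[W,\Delta_xS]a$ bring in only the derivatives $\pa_x^{\kappa}S$ with $2\le|\kappa|\le s$, which are bounded by hypothesis (this is where $s\ge m+2$ enters), multiplied by terms of weight-plus-order $\le m$; (d) the source gives $(Wa,WR)_{L^2}$. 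Summing yields $\tfrac{d}{dt}\|a\|_{B^m}^2\le C\|a\|_{B^m}^2+2(a,R)_{B^m}$ with $C$ depending only on $m$ and $\sup_{2\le|\kappa|\le s}\|\pa^{\kappa}_xS\|_{L^\infty([0,T]\times\RR^{3-d})}$, which is \eqref{eq:estd}; applying Young's inequality to the last term gives \eqref{eq:estd2}. The constants are $\alpha$-independent precisely because of (a) and (b), so the degenerate case $\alpha=0$ is covered.

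\emph{Steps 2 and 3: uniqueness and existence.} For uniqueness, the $m=0$ instance of Step 1 (only the cancellation (b) and the vanishing (a), no commutators) gives $\tfrac{d}{dt}\|w(t)\|_{L^2}^2=0$ for any solution $w$ with $w(0)=0$, $R=0$ in the class $C([0,T];B^m)\cap C^1([0,T];B^{m-2})$ (all pairings are legitimate since $m\ge2$), hence $w\equiv0$. For existence, let $\Pi_n$ be the spectral projector of $\calH$ onto its first $n$ eigenspaces (tensor products of Hermite functions) and solve the linear ODE $\pa_ta_n=\Pi_n\big(-\nabla_xS\cdot\nabla_xa_n-\tfrac12a_n\Delta_xS+\tfrac{i\alpha}{2}\Delta_xa_n+R\big)$ with $a_n(0)=\Pi_na_0$; it has a unique global solution in $C^1([0,T];\Pi_nL^2)$, valued in the Schwartz class. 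Since $\Pi_n$ commutes with $\calH$, hence with $(1+\calH)^{m/2}$, and fixes $a_n$, the projector disappears from every scalar product and the computation of Step 1 applies verbatim to $a_n$, giving a bound on $\|a_n\|_{L^\infty([0,T];B^m)}$ uniform in $n$, and then a bound on $\|\pa_ta_n\|_{L^\infty([0,T];B^{m-2})}$ from the equation. By the Aubin--Lions lemma (using $B^m\hookrightarrow\hookrightarrow B^{m-1}\hookrightarrow B^{m-2}$) a subsequence converges in $C([0,T];B^{m-1})$ and weakly-$*$ in $L^\infty([0,T];B^m)$ to a limit $a$; passing to the limit (with $\Pi_n\to I$ strongly) shows that $a$ solves \eqref{eq:d}, and lower semicontinuity of the norm preserves the bound.

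\emph{Step 4 and the main obstacle.} It remains to upgrade $a\in L^\infty([0,T];B^m)\cap C([0,T];B^{m-1})$ to the stated regularity: $a\in C_w([0,T];B^m)$, and since \eqref{eq:d} is time-reversible, running Step 1 forwards and backwards in time shows that $t\mapsto\|a(t)\|_{B^m}^2$ is (locally Lipschitz) continuous, so weak continuity together with continuity of the norm in the Hilbert space $B^m$ gives $a\in C([0,T];B^m)$; then $\nabla_xS\cdot\nabla_xa$, $a\Delta_xS$ and $\Delta_xa$ all lie in $C([0,T];B^{m-2})$ (using \eqref{eq:Balg} to absorb the linear growth of $\nabla_xS$, and $s\ge m+2$ for the products with $S$), hence $\pa_ta\in C([0,T];B^{m-2})$ and $a\in C^1([0,T];B^{m-2})$, and re-running Step 1 on $a$ itself (justified by a mollification argument) gives \eqref{eq:estd} for the constructed solution. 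I expect the main obstacle to be the bookkeeping in Step 1: one must verify that \emph{every} commutator generated by $W$ falls back into the span of the norm-defining monomials $\bx^\mu\pa^\nu$, so that no derivative and no weight is lost, and that the transport/zeroth-order cancellation together with the vanishing of the dispersive contribution hold order by order --- this is exactly what makes $C$ independent of $\alpha$ and lets the estimate degenerate gracefully to $\alpha=0$.
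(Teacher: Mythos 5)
First, a remark on the comparison: the paper does not actually prove Lemma \ref{lem:bound} --- it is quoted from the companion work \cite{BaoLetMehatsEN} (Lemma 2.6 there), with the sole indication that the crucial bound \eqref{eq:estd} comes from an energy estimate. Your architecture matches that indicated strategy: exploiting the $L^2$-skew-symmetry of $\nabla_x S\cdot\nabla_x+\frac12\Delta_x S$ and of $\frac{i\alpha}{2}\Delta_x$, performing weighted energy estimates in $B^m$, using Gronwall for \eqref{eq:estd2} and uniqueness, and constructing the solution by a Hermite--Galerkin scheme. Your points (a), (b) and (d), and the part of (c) concerning the pure derivative operators $\partial^\nu$ (this is indeed where $s\geq m+2$ enters, through $[\partial^\nu,\Delta_x S]$), are correct.

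The genuine gap is the rest of claim (c), i.e.\ exactly the bookkeeping you flag as the main obstacle. When the moment part of $W$ meets the transport field one gets $[x^{\alpha_1},\nabla_x S\cdot\nabla_x]=-(\nabla_x S\cdot\nabla_x x^{\alpha_1})$, and for the norm actually used in the paper, $[\,|x|^m,\nabla_x S\cdot\nabla_x\,]a=-m|x|^{m-2}(x\cdot\nabla_x S)\,a$: the \emph{first} derivatives of $S$ appear, and under the definition of ${\tt SQ}_s$ only $\partial^\kappa_x S$ with $|\kappa|\geq2$ are bounded, while $\nabla_x S$ has genuine linear growth. After the exact cancellation (b), the energy identity for $\||x|^m a\|_{L^2}^2$ still contains the real term $m\int |x|^{2m-2}(x\cdot\nabla_x S)|a|^2$, which does not vanish. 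You can close the estimate by writing $\nabla_x S(t,x)=\nabla_x S(t,0)+\int_0^1 D_x^2S(t,\theta x)\,x\,d\theta$, but the Gronwall constant then also involves $\sup_{t\in[0,T]}|\nabla_x S(t,0)|$, and no amount of bookkeeping removes this: take $S_0(x)=v\cdot x\in{\tt SQ}_{s+1}$, $\alpha=0$, $R=0$; then $S(t,x)=-\frac{\tan t}{2}|x|^2+\frac{v\cdot x}{\cos t}+c(t)$, whose derivatives of order $\geq2$ are independent of $v$, the characteristics are $X(t)=X_0\cos t+v\sin t$, and the transported amplitude $a(t,x,z)=(\cos t)^{-(3-d)/2}a_0\bigl((x-v\sin t)/\cos t,\,z\bigr)$ satisfies $\||x|^m a(t)\|_{L^2}\sim|v\sin t|^m\|a_0\|_{L^2}$ as $|v|\to\infty$. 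So the estimate with a constant depending \emph{only} on $m$ and $\sup_{2\leq|\kappa|\leq s}\|\partial^\kappa_x S\|_{L^\infty}$ is not what your computation (or any computation) yields; what your scheme does yield is \eqref{eq:estd}--\eqref{eq:estd2} with $C$ depending in addition on $\sup_{t\in[0,T]}|\nabla_x S(t,0)|$, which is finite under hypothesis (ii) and fully sufficient for the applications in Section 3, where $S$ is a fixed function and only uniformity in $(\eps,\alpha,\gamma)$ matters. State your constant accordingly (the over-strong dependence is inherited from the statement as printed, not created by you).

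A smaller repair: in the Galerkin step the monomials $\bx^\mu\partial^\nu$ do not commute with $\Pi_n$, so the Step 1 computation does not apply ``verbatim''; run the a priori estimate on $\|(1+\calH)^{m/2}a_n\|_{L^2}$, which does commute with $\Pi_n$, and transfer the bound by equivalence of norms (or redo the commutator analysis for $(1+\calH)^{m/2}$), before passing to the limit as you describe.
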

\subsection{Proofs of Theorems \ref{theo:local1} and \ref{theo:limit} and Proposition \ref{res:0mode}}
Theorem \ref{theo:local1} can be proved by standard techniques. Point \eqref{theo1:pt1} is a consequence of Proposition \ref{lem:eik}. The existence and uniqueness result \eqref{theo1:pt2} stems from a fixed-point technique based on the Duhamel formulation of the different equations and on the local Lipschitz estimates of Proposition \ref{prop:tameF}. The uniform bound \eqref{theo1:pt3} can be obtained by Gronwall lemma. For details, one can refer for instance to \cite{BaoLetMehatsEN} where the case $\lambda_0 = 0$ was treated.

Let us now prove Theorem \ref{theo:limit}.

%
%
%
%
%

\ms
\ni
	\textit{Averaging limit $\eps\to0$: proof of \eqref{error1}.}
	 For $\gamma\in[0,1]$, let us introduce the function
	\[
		\begin{array}{llll}
			\mathscr{F}^\gamma:	&\RR\times B^m(\RR^{3})&\longrightarrow & B^m(\RR^{3})\\
				&(\theta,u)&\longmapsto&\int_0^\theta (F^\gamma(s,u)-F_{av}^\gamma(u))ds
		\end{array}
	\]
	which satisfies the following properties for every $u\in B^m(\RR^{3})$:
	\begin{enumerate}
		\item[(a)] $\theta\mapsto \mathscr{F}^\gamma(\theta,u)$ is a $2\pi$-periodic function, since $\theta\mapsto F^\gamma(\theta,u)$ is $2\pi$-periodic and $F_{av}^\gamma$ is its average,
		\item[(b)] 	\label{eq:lemboundH1}
		if $\|u\|_{B^m}\leq M$ then
		$\|\mathscr{F}^\gamma(\theta,u)\|_{B^m}\leq 4\pi C_{m}M^3$ for all $\theta\in\RR$,
		where $C_{m}$  was defined in Proposition \ref{prop:tameF}.
	\end{enumerate}
	Using the relations
	 \bee
	 	F^{\gamma}(t/\eps^2,u(t)) = \(F^\gamma(t/\eps^2,u(t))-F_{av}^\gamma(u(t))\) + F_{av}^\gamma(u(t)),\\
		\eps^2\frac{d}{dt}\left(\mathscr{F}^\gamma(t/\eps^2,u(t))\right) = \left(F^\gamma(t/\eps^2,u(t))-F_{av}^\gamma(u(t))\right)+\eps^2 D_u\mathscr{F}^\gamma(t/\eps^2,u(t))(\pa_s u(t)),
	\eee
	 and equations \eqref{eq:b2} and \eqref{eq:b1}, we obtain for all $(\alpha,\gamma)\in[0,1]^2$ and $\eps\in (0,1]$,
	\bea \label{eq:lemdiff1}
		&&\lefteqn{\left(\pa_t + \nabla_x S\cdot \nabla_x +\frac{\Delta_x S }{2}-\frac{i\alpha}{2}\Delta_x\right) \left(\beq-\beqo\right) =}\\
		&&\qquad\qquad -i\left(F_{av}^\gamma(\beq)-F_{av}^\gamma(\beqo)\right) -i\eps^2 \pa_t\mathscr{F}^\gamma(t/\eps^2,\beq)\nonumber\\
		&&\qquad\qquad +i\eps^2D_u\mathscr{F}^\gamma(t/\eps^2,\beq)(\pa_t \beq)\nonumber.
	\eea	
	We have that
	\bea \label{lem:ineq1}
		\sup_{s\in[0,\overline T]}\sup_{\eps,\alpha}\|D_u\mathscr{F}^\gamma(s/\eps^2,\beq(s))(\pa_t \beq(s))\|_{B^{m-2}}\leq C.
	\eea
	Indeed, according to Theorem \ref{theo:local1}, the sequences
	\[
		(\beq)_{\eps,\alpha,\gamma} \mbox{ and }(\pa_t \beq)_{\eps,\alpha,\gamma}
	\]
	 are uniformly bounded, respectively in
	 \[
	 	L^\infty([0,\overline T];B^m(\RR^{3}))\mbox{ and in }L^\infty([0,\overline T];B^{m-2}(\RR^{3})).
	\]
	 Thanks to Lemma \ref{lem:K}, we get
		\begin{align*}
		& \int_0^\theta \left\|\(D_u F_2^\gamma(s,\beq)-D_u F_{2,av}^\gamma(\beq)\)\pa_t \beq\right\|_{B^{m-2}}ds\\
		 &\ \leq \int_0^\theta3\lambda_0\left\|\left(V^\gamma_{\rm dip}*\(2\RE (e^{-is \mathcal{H}_z}\beq\overline{e^{-is \mathcal{H}_z}\pa_t\beq}\)\right)e^{-is \mathcal{H}_z}\beq\right\|_{B^{m-2}}ds\\
		 &\;+ \int_0^\theta3\lambda_0\left\|\left(V^\gamma_{\rm dip}*|e^{-is \mathcal{H}_z}\beq|^2\right)e^{-is \mathcal{H}_z}\pa_t\beq\right\|_{B^{m-2}}ds\\
		 &\; + \frac{\theta}{2\pi}\int_0^{2\pi} 3\lambda_0\left\|\left(V^\gamma_{\rm dip}*\(2\RE (e^{-is \mathcal{H}_z}\beq\overline{e^{-is \mathcal{H}_z}\pa_t\beq}\)\right)e^{-is \mathcal{H}_z}\beq\right\|_{B^{m-2}}ds\\
		 &\; + \frac{\theta}{2\pi}\int_0^{2\pi} 3\lambda_0\left\|\left(V^\gamma_{\rm dip}*|e^{-is \mathcal{H}_z}\beq|^2\right)e^{-is \mathcal{H}_z}\pa_t\beq\right\|_{B^{m-2}}ds
	\end{align*}
	so that the dipolar part of $D_u\mathscr{F}^\gamma(t/\eps^2,\beq)(\pa_t \beq)$ is uniformly bounded in $L^\infty([0,\overline T];B^{m-2}(\RR^{3}))$. The same property also holds for the cubic nonlinearity so that we obtain inequality \eqref{lem:ineq1}.
	Then, applying Lemma \ref{lem:bound} to equation \eqref{eq:lemdiff1}, we get
\begin{align*}
&\|\beq-\beqo\|_{B^{m-2}}^2(t)\leq C\int_0^t\|\beq-\beqo\|_{B^{m-2}}^2(s)ds\\
&\hspace*{2cm} +\int_0^t \|F_{av}^\gamma(\beq)-F_{av}^\gamma(\beqo)\|_{B^{m-2}}^2ds\\
&\hspace*{2cm} +\eps^4\int_0^t\|D_u\mathscr{F}^\gamma(t/\eps^2,\beq)(\pa_t \beq)\|_{B^{m-2}}^2ds\\
&\hspace*{2cm} +\eps^2\int_0^t \left(\pa_t\mathscr{F}^\gamma(t/\eps^2,\beq),\beq-\beqo\right)_{B^{m-2}}(s)ds\\
&\hspace*{1cm}\leq C\eps^4+C\int_0^t\|\beq-\beqo\|_{B^{m-2}}^2(s)ds\\
&\hspace*{2cm}-\eps^2\int_0^t \left(\pa_t\mathscr{F}^\gamma(t/\eps^2,\beq),\beq-\beqo\right)_{B^{m-2}}(s)ds,
\end{align*}	
where we used the Lipschitz estimates of Proposition \ref{prop:tameF}. The last term can be treated exactly as in \cite{BaoLetMehatsEN}, integrating by parts in time and using the equation \eqref{eq:lemdiff1}. The conclusion follows by the Gronwall lemma.

%
%
	\subsubsection*{The semi-classical limit $\alpha\to 0$: proof of \eqref{error2}.} The proof of the error estimate \eqref{error2} follows exactly the same arguments as the ones of  \cite[Theorem $1.4.$]{BaoLetMehatsEN} since for $\gamma$ fixed, the new dipolar term can be treated exactly as the cubic term.
	\subsubsection*{The dipole-dipole interaction limit $\gamma\to0$: proof of \eqref{error3}.} In the case of inequality \eqref{error3}, we have for any $\eps\in(0,1]$, $(\alpha,\gamma)\in[0,1]^2$ that
		\bee
			\lefteqn{\pa_t(\beq-\beqg)+\nabla_x S\cdot \nabla_x (\beq-\beqg)+\frac{\Delta_x S}{2}(\beq-\beqg)}\\
			&& = i\frac{\alpha}{2}\Delta_x (\beq-\beqg)-i(F^{\gamma}(s/\eps^2,\beq)-F^\gamma(s/\eps^2,\beqg))\\
			&&-i\(F^\gamma(s/\eps^2,\beqg)-F^0(s/\eps^2,\beqg)\)
		\eee
		so that Lemma \ref{lem:bound}, Proposition \ref{prop:tameF}, Theorem \ref{theo:local1} and Lemma \ref{lem:limV} ensure that
		\bee
			\lefteqn{\|\beq(t)-\beqg(t)\|^2_{B^{m-5}}\leq C\int_0^t\|\beq(s)-\beqg(s)\|^2_{B^{m-5}}ds}\\
			&&+\int_0^t\left(\|F^{\gamma}(s/\eps^2,\beq(s))-F^{\gamma}(s/\eps^2,\beqg(s))\|^2_{B^{m-5}}\right)ds\\
			&&+\int_0^t\left(\|F^{\gamma}(s/\eps^2,\beqg(s))-F^{0}(s/\eps^2,\beqg(s))\|^2_{B^{m-5}}\right)ds\\
			&&\leq C\gamma^{2q}+C\int_0^t\|\beq(s)-\beqg(s)\|^2_{B^{m-5}}ds
		\eee for any $q\in(0,1)$, and by Gronwall's lemma
		\[
			\|\beq-\beqo\|_{\mathcal{C}([0,\overline T];B^{m-5})}\leq C\gamma^{q}.
		\]
		The case $\eps = 0$ follows the same ideas. The proof of \eqref{error3} is complete and \eqref{error4} follows.
	\qed
	
	\subsubsection*{Proof of Proposition \ref{res:0mode}}
	
	In this case, we remark that the solutions remain polarized on a single mode of $\mathcal{H}_z$ as time evolves. Hence, we can apply Lemma \ref{lem:limV3} instead of Lemma \ref{lem:limV} and Proposition \ref{res:0mode} follows from the arguments used in the proof of  the estimate \eqref{error3} in Theorem \ref{theo:limit}

\subsection*{Acknowledgment}
\ni
This work was supported by the Ministry of Education of Singapore grant R-146-000-196-112 (W.B.), 
by the ANR-FWF Project Lodiquas ANR-11-IS01-0003 (L.L.T. and F.M.), 
by the ANR-10-BLAN-0101 Grant (L.L.T.) and by the ANR project Moonrise ANR-14-CE23-0007-01 (F.M.).

\bibliographystyle{plain}
\bibliography{bibliographiebibdesk}

 \end{document}